 \def\arxiv{}
\def\ps@pprintTitle{%
 \let\@oddhead\@empty
 \let\@evenhead\@empty
 \def\@oddfoot{\centerline{\thepage}}%
 \let\@evenfoot\@oddfoot}
\titleformat{\section}[hang]{
    \usefont{T1}{bch}{b}{n}\selectfont} % "bch" - Bitstream Charter, "b" - bold
    {} % label
    {0em} % horizontal separation between label and title body
    {\hspace{-0.4pt}\Large \thesection\hspace{0.6em}} % code preceding the title
    [] % optional code following the title body
\def\ka{\mathbf{k}}
\newcommand{\Rad}{\operatorname{Rad}}
\newcommand{\V}{\operatorname{V}}
\newcommand{\G}{\operatorname{G}}
\newcommand{\Aut}{\operatorname{Aut}}
\newcommand{\car}{\operatorname{char}}
\newcommand{\Ann}{\operatorname*{Ann}}
\newcommand{\Hom}{\operatorname{Hom}}
\newcommand{\J}{{\mathcal{J}}}
\newcommand{\Pe}{{\mathcal{P}}}
\newcommand{\B}{{\mathcal{B}}}
\newcommand{\T}{{\mathcal{J}}}
\newcommand{\Jor}{\operatorname{\J\hspace{-0.063cm} or}}
\newcommand{\dsum}{\displaystyle\sum}
\newtheorem{example}[equation]{Example}
\newtheorem{definition}[equation]{Definition}
\newtheorem{theorem}[equation]{Theorem}
\newtheorem{proposition}[equation]{Proposition}
\begin{document}

\title{The variety of three-dimensional real Jordan algebras}
\author[ime]{Iryna Kashuba\fnref{fn1}\corref{cor1}}
\ead{kashuba@ime.usp.br}

\author[ime]{Mar\'{\i}a Eugenia Martin\fnref{fn2}}
\ead{eugenia@ime.usp.br}

\cortext[cor1]{Corresponding author}
\fntext[fn1]{The author was supported by CNPq (309742/2013-7).}
\fntext[fn2]{The author was supported by the CAPES scholarship 
 for PostDoctoral program of Mathematics, IME-USP.}

\address[ime]{Instituto de Matem\'atica e Estat\'\i stica, Universidade de S\~ao Paulo, R. do Mat\~ao 1010, 05508-090, S\~ao Paulo, Brazil.}

\begin{abstract}
In this paper, we study the variety  $\Jor_{3}$ of three-dimensional Jordan algebras over
the field of real numbers. We establish the list of $26$ non-isomorphic
Jordan algebras and describe the irreducible components of $\Jor_{3}$ proving that it is the union of 
Zariski closure of the orbits of $8$ rigid algebras.
\end{abstract}

\begin{keyword}
 Jordan algebra \sep classification of algebras \sep deformation of algebras
\MSC[2010]{17C27, 17C55, 17C10.} 
\end{keyword}

\maketitle
%%%%%%%%%%%%%%%%%%%%%%%%%%%%%%%%%%%%%%%%%%%%%%%%%%%%%%%%%%%%%%%%%%%%%%%%%%%%%%%%%%%%%%%%%%%%%%%%%%%%%%%%%%%%%%%%%%%%%%%%%%%%
%%%%%%%%%%%%%%%%%%%%%%%%%%%%%%%%%%%%%%%%%%%%%%%%%%%%%%%%%%%%%%%%%%%%%%%%%%%%%%%%%%%%%%%%%%%%%%%%%%%%%%%%%%%%%%%%%%%%%%%%%%%%
%%%%%%%%%%%%ARXIV%%%%%%%%%%%%%%%%%%%%%%%%%%%%%%%%%%%%%%%%%%%%%%%%%%%%%%%%%%%%%%%%%%%%%%%%%%%%%%%%%%%%%%%%%%%%%%%%%%%%%%%%%%%
\ifx\arxiv\undefined 
 \else
 \tableofcontents
 \newpage
 \fi
%%%%%%%%%%%%%%%%%%%%%%%%%%%%%%%%%%%%%%%%%%%%%%%%%%%%%%%%%%%%%%%%%%%%%%%%%%%%%%%%%%%%%%%%%%%%%%%%%%%%%%%%%%%%%%%%%%%%%%%%%%%%
%%%%%%%%%%%%%%%%%%%%%%%%%%%%%%%%%%%%%%%%%%%%%%%%%%%%%%%%%%%%%%%%%%%%%%%%%%%%%%%%%%%%%%%%%%%%%%%%%%%%%%%%%%%%%%%%%%%%%%%%%%%%
%%%%%%%%%%%%%%%%%%%%%%%%%%%%%%%%%%%%%%%%%%%%%%%%%%%%%%%%%%%%%%%%%%%%%%%%%%%%%%%%%%%%%%%%%%%%%%%%%%%%%%%%%%%%%%%%%%%%%%%%%%%%
\section{Introduction}

Let $\V$ be an $n$-dimensional vector space over a field $\ka$ of characteristic $\neq2$, then the bilinear maps $\V\times\V\to\V$ form a vector space $\Hom_{\ka}(\V\times\V,\V)=\V^*\otimes\V^*\otimes\V$ of dimension $n^3$ which has structure of an affine variety $\ka^{n^3}$. The algebras satisfying the Jordan identity form a Zariski-closed affine subset of  $\ka^{n^3}$ which we call the variety of Jordan $\ka$-algebras of dimension $n$ and denote it by $\Jor_n$\footnote{Note that analogously one defines the varieties of associative and Lie algebras.}. Each point of $\Jor_n\subseteq\ka^{n^3}$ will be seen as an $n^3$-tuple of structure constants $(c_{ij}^k)$ with $i,j,k\in \{1,2,3\}$ and it represent a Jordan $\ka$-algebra of dimension $n$,  with respect to some fixed basis.

The linear general  group $\G=\operatorname{GL}(\V)$ operates on $\Jor_n$
by conjugation, decomposing the variety into $\G$-orbits
which correspond to the classes of isomorphic Jordan algebras. If an algebra $\J$ lies in the Zariski closure
of the orbit of a (non-isomorphic) algebra $\J'$ in the variety, then  we will
say that $\J'$ is a deformation of $\J$. An algebra $\J$ whose $\G$-orbit, $\J^{\G}$, is Zariski-open in $\Jor_n$
is called rigid. 

The goal of this paper is to classify algebraically and geometrically Jordan algebras of dimension three over the field $\mathbb{R}$ of real numbers. By geometric classification we mean the problem of determine the orbits of $\Jor_n$ under the action of $\G$, find all possible deformation between the algebras and, finally, determine the complete list of rigid algebras since the closure of the orbits of such algebras  generates an irreducible component of the variety.
 
In this work we study the variety $\Jor_3\subseteq\mathbb{R}^{27}$ of real Jordan algebras of dimension three. Firstly,  we work in the algebraic classification up to isomorphism of these algebras. This allowed us to determine that the number of $\G$-orbits in the variety $\Jor_3$ correspond to $26$. Next, given a three-dimensional real Jordan algebra we determined if it is rigid or we find a deformation of its to finally conclude that the variety $\Jor_3$ has $8$ irreducible components.

The classification problem of the algebraic structures of given dimension
has been extensively studied. The only class of algebras (associative,
Jordan and Lie) which is completely described in any dimension is the one of semi-simple
algebras. In general, the list of all algebras is known only for small
dimensions.

The literature, when the base field is algebraically closed is extensive. Namely, in 1975, P. Gabriel presented in \cite{finiterepresentatiotypeisopen}
the lists of all unitary associative algebras up to dimension four and described all rigid algebras of such variety. Later, G.
Mazzola, in his work \cite{mazzola}, extended the algebraic and geometric classification to dimension  five and he proved that there are ten irreducible components in this variety. In the context of Lie algebras both classification are known for dimension till six,  see \cite{kirillov}. As to Jordan algebras, H. Wesseler in \cite{wesseler} described unitary Jordan algebras up to dimension six and M. E. Martin in  \cite{martin} 
classified algebraically all Jordan algebras (associative and non-associative, unitary and non-unitary) up to dimension four.  Regarding the description of the variety $\Jor_{n}$ the references are rather recent. In \cite[2005]{irynashesta} I. Kashuba and I. Shestakov described the irreducible components of $\Jor_{3}$ and in \cite[2006]{iryna} the first author classified geometrically the unitary Jordan algebras of dimensions four and five.  In  \cite[2011]{ancocheabermudes}, the authors determined the laws and deformations of nilpotent Jordan algebras of dimension three and four over the field of complex numbers. Finally, in \cite[2014]{kashubamartin} I. Kashuba and M. E. Martin generalized these results to obtain a complete description for the varieties $\Jor_{n}$ for $n\leq4$.

When we consider the field of real numbers, the results in the literature for Jordan algebras (and even Lie
and associative algebras) are scarce, including for
small dimensions. In 2007, Ancochea Bermúdez and others, in their work \cite{AncocheaAssoc2D}, classified 
algebraically and geometrically associative algebras of dimension two, and afterwards, in \cite{dim2r},
the authors did an analogous study related to two-dimensional real Jordan algebras.

The paper is organized as follows. In Section \ref{sec:Preliminaries}, we recall  the basic concepts and necessary results for finite-dimensional
 Jordan algebras. Also, we present the methods used to study the irreducible components of the variety of Jordan algebras and the deformations between these algebras. In particular, we show that the dimension of the group of $2$-cocycles of an algebra does not increase under deformations. In Section \ref{sec:3-Jordan-algebras}, we classify all real Jordan algebras of dimension 
 three and also show that all algebras are pairwise non-isomorphic. Finally, in Section \ref{Geometric Classification} we construct
 deformations between algebras in $\Jor_3$ and describe its irreducible components.

\section{Preliminaries\label{sec:Preliminaries}}

In this section we start with the basic concepts, notations and principal
results about finite-dimensional Jordan algebras over a
field $\ka$ of characteristic $0$, and in the second part we 
will introduce the variety of $n$-dimensional Jordan 
algebras, $\Jor_n$, together with its properties.

For the standard terminology on Jordan algebras, the reader is referred to the book of N. Jacobson \cite{jacobson}, for concepts from deformation theory see \cite{gerstenhaber}.

\begin{definition} A \textbf{Jordan $\ka$-algebra} is a  commutative algebra
$\J$ with a multiplication \textquotedbl{}$\cdot$\textquotedbl{}
satisfying the Jordan identity: 
\begin{align}
((x\cdot x)\cdot y)\cdot x & =(x\cdot x)\cdot(y\cdot x),\qquad \text{for any } x,y\in\J\label{eq:identidadejor}
\end{align} 
or, equivalently, its linearization
\begin{equation}
(x,y,z\cdot w)+(w,y,z\cdot x)+(z,y,x\cdot w)=0\mbox{,}\label{eq:linearjord}
\end{equation}
for any $x,\mbox{ }y,\mbox{ }z,\mbox{ }w\in\J$. Here $(x,y,z):=(x\cdot y)\cdot z-x\cdot(y\cdot z)$
is the associator of $x,\, y,\, z$.
\end{definition}

\begin{example}Let $\left(\mathcal{U},j\right)$ be an associative
algebra with an involution $j$. Then
\[
H(\mathcal{U},j)=\left\{ u\in\mathcal{U}\mid u=j(u)\right\} ,
\]
the set of elements symmetric with respect to $j$ together with the multiplication given by the formula $x\odot y=\frac{1}{2}(x\cdot y+y\cdot x)$, 
where $\cdot$ is the multiplication in $\mathcal{U}$, is a Jordan algebra.
\end{example}

\begin{example}
Let $\,\V\,$ be a vector space over $\,\ka\,$ with a symmetric
bilinear form $\,f=f(x,y)$ on $\V$. Then the set
$\,\J(\V,f)=\ka 1+\V\,$ endowed with the multiplication
$$
(\alpha 1+x)(\beta 1+y)=(\alpha\beta+f(x,y))1+\alpha y+\beta
x\qquad \text{ for } \alpha,\beta\in\ka\ \ x,y\in \V
$$
forms a Jordan algebra called the Jordan algebra of the
symmetric bilinear form $\,f$.
\end{example}

For any Jordan algebra $\J$ we define inductively a series of subsets  by setting 
\begin{eqnarray*}
\J^{1}&=&\J^{\left\langle 1\right\rangle }=\J\text{,}\\
\J^{n}&=&\J^{n-1}\cdot\J+\J^{n-2}\cdot\J^{2}+\cdots+\J\cdot\J^{n-1},\\
\J^{\left\langle n\right\rangle }&=&\J^{\left\langle n-1\right\rangle }\cdot\J.
\end{eqnarray*}
For any $i\geq 1$,  both $\J^{i}$ and $\J^{\left\langle i\right\rangle }$ are ideals of the algebra $\J$. The chain 
$\J^{\left\langle 1\right\rangle }\supseteq\J^{\left\langle 2\right\rangle }\supseteq\cdots\supseteq\J^{\left\langle n\right\rangle }\supseteq\cdots$
is called the \textbf{lower
central series of $\J$}. The subset  $\J^{n}$ is called the \textbf{$n$-th power of the algebra
$\J$} . Observe that   $\J^{i}=\J^{\left\langle i\right\rangle }$, for $i=1,2,3$.

\begin{definition} A Jordan algebra $\J$ is called  \textbf{nilpotent} if there exists an integer $s$
such that $\J^{\left\langle s\right\rangle }=0$.
The minimal integer  for which this condition holds is the \textbf{nilindex}
of $\J$. \end{definition}

For a nilpotent algebra $\J$ of nilindex $s$ define the \textbf{nilpotency
type} of $\J$ as the sequence $(n_{1},n_{2},n_{3},\cdots,n_{s-1})$,
where $n_{i}=\dim\left(\J^{\left\langle i\right\rangle }/\J^{\left\langle i+1\right\rangle }\right)$.
Observe that all $n_{i}>0$. 

%In fact, suppose that there exists
%an $i\in\mathbb{N}$, $1\leq i\leq s-1$ such that $n_{i}=0$ then
%$\dim\J^{\left\langle i\right\rangle }=\dim\J^{\left\langle i+1\right\rangle }$
%and since $\J^{\left\langle i+1\right\rangle }\subseteq\J^{\left\langle i\right\rangle }$
%we have $\J^{\left\langle i\right\rangle }=\J^{\left\langle i+1\right\rangle }$.
%Consequently $\J^{\left\langle i+2\right\rangle }=\J^{\left\langle i\right\rangle }\cdot\J=\J^{\left\langle i\right\rangle }$,
%by induction $\J^{\left\langle i\right\rangle }=\J^{\left\langle k\right\rangle }$
%for every $k\in\mathbb{N},\mbox{ }k>i$. In particular it holds for
%$k=s$, so $\J^{\left\langle i\right\rangle }=\J^{\left\langle s\right\rangle }=0$
%which is impossible since $s$ is the nilindex of $\J$. 

\begin{definition}
A Jordan algebra $\J$ is called:
\begin{enumerate}
 \item \textbf{simple} if $0$ and
$\J$ are the only ideals of $\J$ and $\J²\neq0$. 
\item\textbf{semi-simple}
if it is a direct sum of simple algebras.
\item\textbf{central simple} if $\J_K=\J\otimes_{\ka}K$ is simple for any extension $K$ of $\ka$.
\end{enumerate}
\end{definition} 

The following theorem classifies all finite-dimensional central simple
Jordan algebras.

\begin{theorem} \label{thm:dimn0}\cite[V.7]{jacobson} Let
$\J$ be a finite-dimensional central simple Jordan algebra over $\ka$. Then we have the following possibilities for
$\J$: 
\begin{enumerate}[label=\roman*.]
\item $\J=\ka$, 
\item $\J=\J(\V,f)$, the Jordan algebra of a non-degenerate symmetric bilinear
form $f$ on a finite-dimensional $\ka$-vector space $\V$ such that $\dim \V>1$, 
\item $\J=H(\mathcal{U},j)$, where $(\mathcal{U},j)$ is a finite-dimensional
central simple associative algebra with involution $j$ of degree $n\geq3$,
or
\item $\J$ is an algebra such that there exists a finite extension field
$K$ of the base field $\ka$ such that $\J_{K}\simeq H(M_{3}(\mathfrak{C}_{K}),\tau)$
where $M_{3}(\mathfrak{C}_{K})$ is the algebra of all $3\times3$
matrices with elements in a Cayley algebra $\mathfrak{C}$ over $K$
and $\tau$ is the standard involution conjugate transpose. 
\end{enumerate}
\end{theorem}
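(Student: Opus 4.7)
The plan is to classify $\J$ by its \emph{capacity} $n$, i.e.\ the number of primitive orthogonal idempotents appearing in a decomposition of the identity, and to reconstruct $\J$ from the resulting Peirce decomposition $\J=\bigoplus_{i\le j}\J_{ij}$. First I would show that central simplicity together with finite dimension forces the existence of a unit, so that a decomposition $1=e_1+\cdots+e_n$ with $e_i$ primitive and pairwise orthogonal exists and is unique up to inner automorphism; this uses the standard Peirce relations $x\cdot e_i \in \{0,\tfrac12 x,x\}$-type eigenspaces together with the fact that the radical of a central simple Jordan algebra is zero.

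For small capacity the structure is essentially forced. If $n=1$ then $\J$ itself is a Jordan division algebra; central simplicity rules out any proper field extension of $\ka$ and gives case (i). If $n=2$, the diagonal pieces $\J_{11},\J_{22}$ are Jordan division algebras which one identifies with $\ka$ by central simplicity, while the product $\J_{12}\cdot \J_{12}\subseteq \J_{11}+\J_{22}$ yields a non-degenerate symmetric bilinear form $f$ on $\V:=\J_{12}$; a direct verification via the linearized identity \eqref{eq:linearjord} then shows $\J\cong \J(\V,f)$, which is case (ii).

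For $n\ge 3$ the central input is Jacobson's \emph{coordinatization theorem}: one selects matrix units $e_{ij}\in \J_{ij}$ satisfying Jordan analogues of the associative matrix-unit relations, and from the off-diagonal Peirce pieces one extracts a unital alternative coordinate algebra $\mathfrak{D}$ with involution $\sigma$ such that $\J\cong H(M_n(\mathfrak{D}),\sigma)$ with $\sigma$ the conjugate-transpose. A further check, essentially Artin's theorem applied across three distinct Peirce components, forces $\mathfrak{D}$ to be associative as soon as $n\ge 4$, producing case (iii). When $n=3$ the coordinate algebra is only guaranteed to be alternative; passing to a splitting extension $K/\ka$ and invoking the Hurwitz classification of composition algebras pins $\mathfrak{D}_K$ down to be a Cayley algebra $\mathfrak{C}_K$, giving the exceptional case (iv).

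The main obstacle is the coordinatization step itself. Extracting a well-defined alternative algebra $\mathfrak{D}$ from the Peirce pieces and verifying the alternative law requires intricate manipulation of the linearized Jordan identity across triples of Peirce components, together with a consistency (transport-of-structure) argument to choose matrix units compatibly. A secondary difficulty is isolating the exceptional case in capacity $3$ from the associative one; this is ultimately handled by extending scalars to a splitting field so that the Hurwitz theorem applies and then descending the resulting $\mathfrak{C}_K$-structure back to $\ka$ as in the statement of (iv).
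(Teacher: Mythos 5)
The paper does not actually prove this statement---it is quoted from \cite[V.7]{jacobson}---so your sketch can only be measured against the standard argument in that reference. Your skeleton (Peirce decomposition relative to a frame, coordinatization for capacity $\geq 3$, forcing associativity of the coordinate algebra for capacity $\geq 4$, Hurwitz/composition-algebra theory for the exceptional case) is the right one for \emph{reduced} (in particular split) algebras, but the low-capacity cases are wrong as you state them, and that is exactly where the content of the theorem over a non-algebraically-closed field lives. Capacity $1$ does not force $\J=\ka$: a central simple Jordan algebra of capacity $1$ is a Jordan division algebra, and these can be arbitrarily large. The paper's own algebra $\T_4=\J(\V,f_2)$, with $f_2$ negative definite, is a $3$-dimensional central simple real Jordan algebra whose only nonzero idempotent is the unit (an idempotent $\alpha e_1+\beta e_2+\gamma e_3$ with $(\beta,\gamma)\neq(0,0)$ would require $\beta^2+\gamma^2=-\tfrac{1}{4}$), so it has capacity $1$ yet belongs to case (ii), not case (i). Likewise in capacity $2$ you cannot ``identify $\J_{11}$ and $\J_{22}$ with $\ka$ by central simplicity'': the diagonal Peirce components attached to a primitive frame are Jordan division algebras over $\ka$ which need not equal $\ka$.

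The four cases of the theorem are organized by the \emph{degree} of the generic minimal polynomial, not by capacity, and these two invariants coincide only after extending scalars to a splitting field. The missing idea in your proposal is therefore the descent step: one first passes to the algebraic closure (or a suitable finite extension), where the algebra becomes split, its capacity equals its degree, and your coordinatization argument applies; one then classifies the $\ka$-forms of each split model, which is where anisotropic $\J(\V,f)$ and $H(\mathcal{U},j)$ for $\mathcal{U}$ a non-split central simple algebra arise. Without that step the division-algebra cases---precisely the ones the real classification in this paper needs, e.g.\ $\T_4$---are never reached. You do invoke scalar extension, but only for the exceptional capacity-$3$ case, where the statement of case (iv) already builds it in.
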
 

%\begin{proposition}
%\label{prop:quocienteSS}\cite[V.2 and V.5]{jacobson} Every finite-dimensional
%Jordan algebra has a unique maximal nilpotent ideal which is called
%the \textbf{radical} of $\J$, $N=\operatorname*{Rad}(\J)$, and the
%quotient $\J/N$ is semi-simple. Moreover $\J_{ss}:=\J/N$ has an identity
%element and its decomposition into simple components is unique. \end{proposition}
The following proposition is known as the Wedderburn Principal Theorem,
in this case is formulated for finite-dimensional Jordan algebras over a field
of characteristic $0$.

\begin{proposition}\label{prop:ssmaisnilpo}\cite[p. 405]{penico}
Let $\J$ be a finite-dimensional Jordan algebra over a field $\ka$
of characteristic $0$, and let $N=\Rad(\J)$ be the radical of $\J$ (i.e. the unique maximal nilpotent ideal of $\J$).
Then there exists a subalgebra $\J_{ss}$ of $\J$ such that $\J=\J_{ss}\oplus N$
(as vector spaces) and $\J_{ss}\simeq\J/N$. \end{proposition}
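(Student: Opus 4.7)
The plan is to prove this classical splitting theorem by induction on the nilpotency index of the radical $N = \Rad(\J)$. Let $s$ be the smallest integer with $N^{\langle s\rangle}=0$; the goal is to produce a subalgebra $\J_{ss}\subset\J$ mapping isomorphically onto $\J/N$.

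First I would handle the base step, where $s \leqs 2$, i.e.\ $N^{\langle 2\rangle}=0$. In this situation $N$ is a square-zero ideal and therefore inherits a natural structure of Jordan bimodule over the semisimple quotient $\J/N$. The short exact sequence $0\to N \to \J \to \J/N \to 0$ is then an abelian extension, and its equivalence class lives in the second Jordan cohomology group $H^{2}(\J/N,N)$. Since $\car(\ka)=0$, the quotient $\J/N$ is separable semisimple, and the classical vanishing theorem for Jordan cohomology of a finite-dimensional semisimple Jordan algebra with coefficients in any finite-dimensional bimodule (Albert--Penico, which rests on the structure theorem \ref{thm:dimn0} and is established case-by-case for each simple factor) yields $H^{2}(\J/N,N)=0$. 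Consequently the extension splits, producing the desired complementary subalgebra $\J_{ss}$.

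For the inductive step, assume $s\geqs 3$ and that the theorem holds for all Jordan algebras whose radical has nilpotency index less than $s$. Consider the quotient $\J/N^{\langle s-1\rangle}$, whose radical $N/N^{\langle s-1\rangle}$ has nilpotency index $s-1$. By the inductive hypothesis there is a subalgebra $\overline{B}$ of $\J/N^{\langle s-1\rangle}$ with $\overline{B}\simeq\J/N$ and $\J/N^{\langle s-1\rangle}=\overline{B}\oplus N/N^{\langle s-1\rangle}$. Pulling $\overline{B}$ back yields a subalgebra $B\subseteq\J$ containing $N^{\langle s-1\rangle}$ whose radical is exactly $N^{\langle s-1\rangle}$; since $\bigl(N^{\langle s-1\rangle}\bigr)^{\langle 2\rangle}\subseteq N^{\langle 2s-2\rangle}=0$, the base case applies to $B$, yielding a subalgebra $\J_{ss}\subseteq B\subseteq\J$ isomorphic to $B/N^{\langle s-1\rangle}\simeq\J/N$ and complementary to $N$ in $\J$.

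The genuinely hard part is the cohomological input in the base case: proving $H^{2}(\J/N,N)=0$ for an arbitrary finite-dimensional bimodule $N$ over a semisimple Jordan algebra in characteristic $0$. Unlike the associative setting, where separability gives this almost immediately, the Jordan case requires inspecting each type of simple factor listed in Theorem \ref{thm:dimn0} and invoking a trace-form / averaging argument (Penico's construction) that is only available away from small characteristic. Everything else in the argument is a routine lift-and-reduce, but this vanishing statement is where the characteristic zero hypothesis is essential and where the bulk of the technical work of \cite{penico} is concentrated; I would simply quote it at the decisive moment rather than reproduce its proof.
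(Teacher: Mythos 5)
The paper offers no proof of this proposition at all --- it is imported wholesale from Penico's paper --- so your outline can only be judged on its own terms. Its overall architecture (induct so as to reduce to a square-zero radical, then split the resulting abelian extension by the vanishing of $H^{2}(\J/N,N)$ for the separable semisimple quotient, quoting the second Whitehead lemma for Jordan algebras as the technical core) is indeed the classical route, and the base case is fine as a reduction to that cited vanishing theorem.

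The inductive step, however, contains a genuine gap that is specific to Jordan algebras. You pass to the quotient $\J/N^{\langle s-1\rangle}$, but $N^{\langle s-1\rangle}$ is a term of the lower central series of the \emph{ideal} $N$, and in a Jordan algebra the product of two ideals --- already the square $N^{2}=N^{\langle 2\rangle}$ of an ideal --- need not be an ideal of the ambient algebra. (The paper's remark that $\J^{i}$ and $\J^{\langle i\rangle}$ are ideals concerns powers of the whole algebra, not powers of a proper ideal.) Hence $\J/N^{\langle s-1\rangle}$ is not a priori defined as a Jordan algebra and the induction does not get off the ground as written; in the associative and Lie settings this step is automatic, which is exactly why the Jordan case required new work. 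Penico's resolution is to replace the lower central series by the series $N^{(0)}=N$, $N^{(k+1)}=\bigl(N^{(k)}\bigr)^{2}+\bigl(N^{(k)}\bigr)^{2}\J$: each $N^{(k)}$ \emph{is} an ideal of $\J$, the successive quotients $N^{(k)}/N^{(k+1)}$ have zero multiplication, and the series terminates because $N$ is nilpotent. Running your induction on the length of this series repairs the argument and recovers the standard proof. A smaller slip in the same step: the containment $\bigl(N^{\langle s-1\rangle}\bigr)^{\langle 2\rangle}\subseteq N^{\langle 2s-2\rangle}$ appeals to a multiplicativity $N^{\langle i\rangle}N^{\langle j\rangle}\subseteq N^{\langle i+j\rangle}$ that holds for the powers $N^{i}$ but not for the lower central series; the statement you actually need is the immediate $N^{\langle s-1\rangle}\cdot N^{\langle s-1\rangle}\subseteq N^{\langle s-1\rangle}\cdot N=N^{\langle s\rangle}=0$.
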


Moreover, the quotient $\J_{ss}:=\J/N$ is semi-simple, has an identity element and its decomposition into simple components is unique, see \cite[V.2, V.5]{jacobson}. The identity element $\overline{e}$ of $\J_{ss}$ could be lifted to an idempotent $e$ of $\J$. Then, without loss of generality, we may assume  that all finite-dimensional  Jordan algebra over a field $\ka$ of $\car\ka=0$ either is nilpotent or has an idempotent element. Thus we have:

\noindent  \begin{theorem} \label{thm:Peirce1}\cite[III.1]{jacobson} Let $e$
be an idempotent  in
$\J$ then we have the following decomposition into a direct sum of subspaces
\[
\J=\Pe_{1}\oplus\Pe_{\frac{1}{2}}\oplus\Pe_{0},
\]
where $\Pe_{i}=\{x\in\J\mid x\cdot e=ix\}$, for
$i=0,\mbox{ }\frac{1}{2},\mbox{ }1$. 
 \end{theorem}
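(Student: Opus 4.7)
The plan is to exhibit a cubic polynomial with three distinct roots $0,\tfrac{1}{2},1$ that annihilates the left-multiplication operator $L_e:\J\to\J$ defined by $L_e(x)=e\cdot x$. Once such a polynomial is available, a standard linear-algebra argument (minimal polynomial with simple roots implies diagonalizability) yields the direct-sum decomposition of $\J$ into the three eigenspaces $\Pe_1$, $\Pe_{\frac{1}{2}}$ and $\Pe_0$, which by the very definition of $\Pe_i$ is precisely the decomposition claimed.

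To locate the polynomial, I would first rewrite the Jordan identity (\ref{eq:identidadejor}) as the operator relation $[L_{x^{2}},L_x]=0$ and then linearize it by substituting $x\mapsto x+\la y$ and reading off the coefficient of $\la$. This produces the classical operator identity
\[
[L_{x^{2}},L_y]\;=\;2[L_x,L_{xy}].
\]
Specializing $x=e$ and using $e^{2}=e$ reduces this to $[L_e,L_y]=2[L_e,L_{ey}]$. Evaluating both sides on the vector $e$ itself and repeatedly invoking $e\cdot e=e$ collapses the identity to
\[
2L_e^{3}-3L_e^{2}+L_e\;=\;0\quad\text{on }\J,
\]
or equivalently $L_e(2L_e-I)(L_e-I)=0$, which is the desired cubic.

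The main obstacle I anticipate is picking substitutions in the linearized Jordan identity that funnel everything down to a relation in $L_e$ alone; an alternative route is to start from (\ref{eq:linearjord}), set two or three of the free slots equal to $e$ (with $y$ arbitrary), and extract the same cubic directly. After that the argument is routine: the eigenspaces $\ker L_e$, $\ker(L_e-\tfrac{1}{2}I)$ and $\ker(L_e-I)$ are by definition $\Pe_0$, $\Pe_{\frac{1}{2}}$ and $\Pe_1$; their pairwise intersections are trivial because the eigenvalues are distinct; and their sum is all of $\J$ because the annihilating polynomial factors into distinct linear factors, which forces $L_e$ to be diagonalizable.
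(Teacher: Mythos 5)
Your argument is correct and is exactly the standard proof of the Peirce decomposition that the paper itself does not reproduce but merely cites from Jacobson (III.1): linearize $[L_{x^{2}},L_{x}]=0$ to get $[L_{x^{2}},L_{y}]=2[L_{x},L_{xy}]$, specialize $x=e$ and evaluate at $e$ to obtain $2L_{e}^{3}-3L_{e}^{2}+L_{e}=0$, and then use that this cubic has the distinct roots $0,\tfrac{1}{2},1$ (valid since $\car\ka\neq 2$) to split $\J$ into the corresponding eigenspaces. All the individual computations you outline check out, so no gaps.
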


This decomposition is called the
\textbf{ Peirce decomposition} \textbf{of $\J$ relative to idempotent}
$e$. The multiplication table for the Peirce components $\Pe_i$ is: 
\begin{equation}
\begin{array}{c}
\Pe_{1}^{2}\subseteq\Pe_{1},\qquad\Pe_{1}\cdot\Pe_{0}=0,\qquad\Pe_{0}^{2}\subseteq\Pe_{0}\text{,}\\
\Pe_{0}\cdot\Pe_{\frac{1}{2}}\subseteq\Pe_{\frac{1}{2}},\qquad\Pe_{1}\cdot\Pe_{\frac{1}{2}}\subseteq\Pe_{\frac{1}{2}},\qquad\Pe_{\frac{1}{2}}^{2}\subseteq\Pe_{0}\oplus\Pe_{1}\text{.}
\end{array}\label{eq:pierce_one}
\end{equation}

 Furthermore, we have the following generalization: if
$\J$ is a Jordan algebra with an identity element which is a sum of pairwise orthogonal idempotents $e_{i}$, i.e.   $1=\sum_{i=1}^{n}e_{i}$,
we have the refined \textbf{Peirce decomposition} \textbf{of $\J$ relative
to idempotents }$\{e_{1},\ldots,e_{n}\}$: 
\begin{equation}
\J=\bigoplus_{1\leq i\leq j\leq n}\Pe_{ij}\label{eq:decopierce}
\end{equation}
where $\Pe_{ii}=\left\{ x\in\J\mid x\cdot e_{i}=x\right\} $
and $\Pe_{ij}=\left\{ x\in\J\mid x\cdot e_{i}=x\cdot e_{j}=\frac{1}{2}x\right\} $. 
The multiplication table for the Peirce components is:
\begin{equation}
\begin{array}{c}
\Pe_{ii}^{2}\subseteq\Pe_{ii},\qquad\Pe_{ij}\cdot\Pe_{ii}\subseteq\Pe_{ij},\qquad\Pe_{ij}^{2}\subseteq\Pe_{ii}\oplus\Pe_{jj}\text{,}\\
\Pe_{ij}\cdot\Pe_{jk}\subseteq\Pe_{ik},\qquad\Pe_{ii}\cdot\Pe_{jj}=\Pe_{ii}\cdot\Pe_{jk}=\Pe_{ij}\cdot\Pe_{kl}=0,
\end{array}\label{eq:pierce_many}
\end{equation}
where the indices $i$, $j$, $k$, $l$ are all different. Note that the Peirce decomposition is inherited for ideals of $\J$.

Let $\V$ be an $\,n$-dimensional $\,{\ka}$-vector space with a
fixed basis $\{e_1,e_2,\cdots,e_n\}$. To endow $\,\V\,$ with a Jordan $\,\ka$-algebra structure, $(\J,\cdot)$, it suffices to specify $\,n^3\,$ structure
constants $\,c_{ij}^k\in\ka$, namely,
$$
e_i\cdot e_j=\sum_{k=1}^n c_{ij}^k e_k, \qquad i,j
\in\{1,2,\dots,n\}.
$$
The choice of $\,c_{ij}^k\,$ is not arbitrary, it must reflect the
fact that Jordan algebras are commutative and
satisfy Jordan identity \eqref{eq:identidadejor}. Therefore we obtain:
\begin{equation}\label{J1}
\begin{array}{c}
c_{ij}^k=c_{ji}^k,\vspace{0,15cm}\\
\dsum_{a=1}^n c_{ij}^a\dsum_{b=1}^n c_{kl}^b c_{ab}^p-\dsum_{a=1}^n
c_{kl}^a\dsum_{b=1}^n c_{ja}^b c_{ib}^p+\dsum_{a=1}^n
c_{lj}^a\dsum_{b=1}^n c_{ki}^b
c_{ab}^p-\vspace{0,15cm}\\\dsum_{a=1}^n c_{ki}^a\dsum_{b=1}^n
c_{ja}^b c_{lb}^p+\dsum_{a=1}^n c_{kj}^a\dsum_{b=1}^n c_{il}^b
c_{ab}^p-\dsum_{a=1}^n c_{il}^a\dsum_{b=1}^n c_{ja}^b c_{kb}^p=0,
\end{array}
\end{equation}
for all $\,i,j,k,l,p\in\{1,2,\dots,n\}$. Polynomial equations \eqref{J1} cut
out an \textbf{algebraic variety} $\,\Jor_n\,$ in $\,\ka^{n^3}=
\V^*\otimes \V^*\otimes \V\,$. A point $\,(c_{ij}^k)\in \Jor_n\,$
represents an $\,n$-dimensional $\ka$-algebra $\,\J\,$ along
with a particular choice of basis (which gives the structure
constants $\,c_{ij}^k\,$). A change of basis in $\,\J\,$ gives
rise to a possible different point of $\,\Jor_n$ or, equivalently, the general linear group 
$\G=\operatorname{GL}(\V)$  operates on $\,\Jor_n\,$ via ``conjugation'':
\begin{equation}\label{gl-action}
g(\J,\cdot)\ \mapsto\ (\J,\cdot_g), \qquad x\cdot_g y= g(g^{-1}x\cdot
g^{-1}y),
\end{equation} for any
$\,\J\in \Jor_n\,$, $\,g\in {\G}\,$ and $\,x,y\in \V$. The
\textbf{$\,{\G}$-orbit} of a Jordan algebra $\,\J\,$, that is, the set of
all images of $\,\J\,$ under the action of $\,{\G}$, is denoted by
$\,\J^{\G}$. The set of different $\,{\G}$-orbits of this action is
in one-to-one correspondence with the set of isomorphism classes of
$\,n$-dimensional Jordan algebras. Now, we can consider the
inclusion diagrams of the Zariski closure of orbits of
$\,n$-dimensional Jordan algebras. To relate the orbits, we say
that $\,\J_1\,$ is  a \textbf{deformation} of $\,\J_2\,$ or that $\J_1$ \textbf{dominates} $\J_2$ and denote this by $\,\J_1\to
\J_2\,$, if the orbit $\, \J_2^{\G}\,$ is contained in the Zariski closure of
the orbit $\,\J_1^{\G}$. A Jordan algebra $\,\J\,$ is called {\it
rigid} if its $\G$-orbit, $\,\J^{\G}$, is a Zariski-open set in
$\,\Jor_n$. In terms of deformation if $\,\J_1\,$ is a deformation
of a rigid algebra $\,\J\,$ then $\,\J_1^{\G}\cap\J^{\G}\neq\varnothing$
and therefore $\,\J_1\simeq\J$.  

The rigid algebras are of
particular interest. Indeed, $\,\Jor_n\,$ as any affine variety
could be decomposed into its irreducible components. Then if
$\,\J\in \Jor_n\,$ is rigid, there exists an irreducible component
$T$  such that $\,\J^{\G}\cap T\,$ is a non-empty open subspace  in
$T$, and therefore the closure of $\J^{\G}$ contains $T$.

The most known sufficient condition for an algebra to be rigid is given in terms of its cohomology group. 
We say that the second cohomology group $H^2(\J,\J)$ of a Jordan algebra $\J$ with coefficients in itself 
vanishes if for every bilinear mapping $h:\J\times\J\to\J$ satisfying
\begin{equation}\label{factorset}
\begin{array}{c}
h(a,b)=h(b,a)\\
(h(a,a)b)a+h(a^{2},b)a+h(a^{2}b,a)=a^{2}h(b,a)+h(a,a)(ba)+h(a^{2},ba)
 \end{array}
\end{equation}
for all $a,b\in\J$ there exists a linear mapping $\mu:\J\to\J$ such that
\begin{equation}\label{hmu}
 h(a,b)=\mu(ab)-a\mu(b)-\mu(a)b\,.
\end{equation}
The bilinear mapping $h$ is called \textbf{$2$-cocycle} of $\J$ and we denote the set of all $2$-cocycles of $\J$ by $Z^2(\J,\J)$. For the precise definition of these groups for Jordan algebras we refer to \cite{jacobson}. 

\begin{proposition}\label{cohomology} If the second cohomology group $H^2(\J,\J)$ of a Jordan algebra $\J$ with coefficients in itself 
vanishes, then $\J$ is rigid. In particular it follows that any semi-simple 
Jordan algebra is rigid. 
\end{proposition}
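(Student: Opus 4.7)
The plan is to apply the classical principle from formal deformation theory (Gerstenhaber, Nijenhuis--Richardson): identify the Zariski tangent space to $\Jor_n$ at $\J$ with $Z^2(\J,\J)$, and the tangent space to the orbit $\J^{\G}$ at $\J$ with the space $B^2(\J,\J)$ of $2$-coboundaries, so that $H^2(\J,\J)=Z^2(\J,\J)/B^2(\J,\J)$ measures precisely the nontrivial infinitesimal deformations of $\J$ modulo those arising from changes of basis.

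First I would write a first-order deformation of the structure constants as $c_{ij}^{k}+\varepsilon h_{ij}^{k}$ with $\varepsilon^{2}=0$ and check that the polynomial relations \eqref{J1} hold modulo $\varepsilon^{2}$ exactly when the bilinear form $h$ defined by $h(e_i,e_j)=\sum_k h_{ij}^{k}e_k$ is symmetric and satisfies the $2$-cocycle relation \eqref{factorset}; this yields $T_{\J}\Jor_n\simeq Z^2(\J,\J)$. Next, taking a curve $g_{\varepsilon}=\mathrm{id}+\varepsilon\mu$ in $\G$ and expanding the conjugated multiplication $\cdot_{g_{\varepsilon}}$ of \eqref{gl-action} to first order in $\varepsilon$ produces exactly the coboundary $h(a,b)=\mu(ab)-\mu(a)b-a\mu(b)$ of \eqref{hmu}, identifying $T_{\J}(\J^{\G})\simeq B^2(\J,\J)$.

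Under the hypothesis $H^{2}(\J,\J)=0$ the two tangent spaces coincide. Since $\J^{\G}$ is the image of the smooth morphism $\G\to\Jor_n$, $g\mapsto g\cdot\J$, it is a locally closed, smooth, constructible subset of $\Jor_n$, and equality of the Zariski tangent spaces at $\J$ forces $\dim\J^{\G}=\dim_{\J}\Jor_n$. Hence $\J^{\G}$ contains a Zariski-open neighbourhood of $\J$ in $\Jor_n$, which is exactly rigidity. For the concluding assertion, I would invoke the Jordan analog of Whitehead's second lemma: every finite-dimensional semi-simple Jordan algebra over a field of characteristic zero has vanishing second cohomology with coefficients in itself (cf.\ \cite{jacobson}, in line with the Wedderburn principle recorded in Proposition \ref{prop:ssmaisnilpo}), and the first part then applies.

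The main delicate point is not any single calculation but the scheme-theoretic tangent-space matching: one must verify both that the Jordan identity linearises to precisely the cocycle relation \eqref{factorset} (and not some strictly weaker condition) and that coincidence of Zariski tangent spaces at $\J$ truly forces $\J^{\G}$ to be Zariski-open in $\Jor_n$. The latter is a standard consequence of the orbit being a constructible algebraic-group orbit together with smoothness of $\G$, but its careful statement over $\mathbb{R}$, rather than an algebraically closed field, is the step that deserves the most care given the real setting of the paper.
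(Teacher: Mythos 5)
Your argument is correct and is exactly the Gerstenhaber--Nijenhuis--Richardson tangent-space argument that the paper itself invokes: it gives no independent proof but simply cites \cite{gerstenhaber} and \cite{iryna}, where the identification $T_{\J}\Jor_n\simeq Z^2(\J,\J)$, $T_{\J}(\J^{\G})\simeq B^2(\J,\J)$ and the resulting openness of the orbit are carried out as you describe. Your added care about the real (non-algebraically-closed) setting and about reducing the second assertion to the Jordan analogue of Whitehead's second lemma is consistent with, and slightly more explicit than, what the paper records.
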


It was originally obtained 
in \cite{gerstenhaber} for associative and Lie algebras, for the case of Jordan algebras the proof 
is analogous, see \cite{iryna}.

\begin{example}\label{T12rigida}
Consider $\T_{12}\in\Jor_3$ the Jordan algebra with basis $\{e_1,n_1,n_2\}$ and multiplication given by $e_{1}^{2}=e_{1}$ and  $e_{1}\cdot n_{i}=\frac{1}{2}n_{i}\;$ for $i=1,2$. Let $h:\T_{12}\times\T_{12}\to\T_{12}$
be a bilinear map satisfying $\eqref{factorset}$, then 
\[
h(e_{1},e_{1})=\alpha e_{1},\quad h(e_{1},n_{i})=\beta_i e_{1}+\frac{\alpha}{2}n_{i},\quad h(n_{i},n_{j})=\beta_j n_{i}+\beta_i n_{j}
\]
for any $\alpha,\beta_i\in\mathbb{R}$ and $1\leq i,j\leq2$. Define a linear mapping $\mu:\T_{12}\to\T_{12}$
as been $\mu(e_{1})=-\alpha e_{1}$ and $\mu(n_{i})=-2\beta_i e_{1}+n_{i}$ for $i=1,\,2$, then
$\eqref{hmu}$ holds and thus $H^{2}(\T_{12},\T_{12})=0$,
which implies $\T_{12}$ is rigid.
 \end{example}

We will construct the deformations between Jordan algebras using the
following property. Let $\{e_1,\cdots,e_n\}$ be a basis of $\,\J\,$
as a vector space. Then the multiplication in algebra $\,\J\,$ is
defined by $\,n^3\,$ structure constants $\,c_{ij}^k$. Let
\begin{equation}\label{matrix}
g(t)\in {\rm Mat_n(\ka[t])} \end{equation} be a change of basis of $\,\J\,$ such that for any $\,t\neq 0\,$ it is
non-degenerate, i.e. $\,g(t)\in \G$. We denote by $\,\J_t\,$ the
algebra obtained from $\,\J\,$ by the change of basis $\,g(t)$
and let $\,c_{ij}^k(t)\,$ denote the corresponding structure
constants. Note that by \eqref{gl-action} $\,\J_t\simeq \J\,$ for
any $\,t\ne 0$. Then if $\,\J_1\,$ is a Jordan algebra defined by
structure constants $d_{ij}^k=c_{ij}^k(0)$ with respect to the same basis
$\{e_1,\cdots,e_n\}$ then $\,\J_1\in \overline{\J^{\G}}\,$ and thus  $\,\J\,$ is a
deformation of $\,\J_1$.

In the following proposition we collect properties which will be used to 
show that there is no deformation between certain algebras.

\begin{proposition}\label{conditions}
 Let $\J, \J_1\in \Jor_n$ and $\J\to \J_1$. Then 
\begin{enumerate}[label=(\roman*)]
\item \label{Aut} $\,\dim\Aut(\J)<
\dim\Aut(\J_1)$, where $\Aut(\J)< \G$ is 
the automorphism group of $\J$. 
\item \label{Rad} $\dim\Rad(\J)\leq\dim\Rad(\J_1)$.
\item  \label{Ann} $\dim\Ann(\J)\leq\dim\Ann(\J_1)$, where $\operatorname*{Ann}(\J)=\left\{ a\in\J\mid a\J=0\right\} $ is the 
annihilator of $\J$.

\item \label{potencia} $\dim\J^r\geq\dim\J_1^r$, for any positive integer $r$.
\item\label{direct_sum}If also $\J',\J'_1\in\Jor_{n'}$ and $\J'\to\J'_1$, then $\J\oplus\J'\to\J_1\oplus\J'_1$.
\item \label{identidade}Any polynomial identity of $\,\J$ is valid
in
 $\,\J_1$.  In particular, any deformation of a non-associative
 algebra  is again non-associative.
 \item\label{dimZ2}$\dim Z^2(\J,\J)\leq\dim Z^2(\J_1,\J_1)$.

\end{enumerate}
\end{proposition}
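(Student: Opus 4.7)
The plan is to handle each of the seven items via a semi-continuity principle applied to the one-parameter family $\{\J_t\}$ from \eqref{matrix} that realizes $\J\to\J_1$. Writing the structure constants as polynomials $c_{ij}^k(t)\in\ka[t]$, each item reduces to the monotonicity of a suitable numerical invariant, regarded as a polynomial function of the point of $\Jor_n$, that we specialize at $t=0$.

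I would first dispose of the direct items \ref{identidade}, \ref{direct_sum} and \ref{Aut}. Polynomial identities cut out Zariski-closed subsets of $\Jor_n$, so any identity of $\J$ passes to every point of $\overline{\J^{\G}}$, giving \ref{identidade}. For \ref{direct_sum}, the block-diagonal family $g(t)\oplus g'(t)$ built from families realizing the two individual deformations realizes $\J\oplus\J'\to\J_1\oplus\J'_1$. For \ref{Aut}, I would use that the orbit $\J^{\G}$ is $\G$-equivariantly isomorphic to $\G/\Aut(\J)$, hence $\dim\J^{\G}=\dim\G-\dim\Aut(\J)$; since by the paper's convention a deformation is between non-isomorphic algebras, the orbit $\J_1^{\G}$ sits in the boundary $\overline{\J^{\G}}\setminus\J^{\G}$, a proper closed subset of the irreducible variety $\overline{\J^{\G}}$ of strictly smaller dimension, forcing $\dim\Aut(\J)<\dim\Aut(\J_1)$.

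Items \ref{Ann}, \ref{potencia} and \ref{dimZ2} I would address uniformly by rank/kernel semi-continuity. Each invariant is the kernel- or image-dimension of a linear map whose matrix entries are polynomials in the structure constants: $\Ann(\J)$ is the kernel of $a\mapsto(a\cdot e_1,\dots,a\cdot e_n)$; $\J^r$ is the image of a multilinear map built iteratively from the multiplication; and $Z^2(\J,\J)$ is the solution space of the linear system \eqref{factorset} in the entries of $h$. Since the rank of a matrix-valued polynomial map is lower semi-continuous, kernel-dimensions are upper semi-continuous and image-dimensions are lower semi-continuous; specialization at $t=0$ then gives the three desired inequalities.

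The main obstacle is \ref{Rad}, since the radical is not the kernel of any natural linear map built directly from the $c_{ij}^k$. My plan is to prove the auxiliary fact that the locus $\{\J\in\Jor_n:\dim\Rad(\J)\geq k\}$ is Zariski-closed in $\Jor_n$ for every $k$. I would introduce the incidence variety
\[
Z_k=\{(\J,W)\in\Jor_n\times\operatorname{Gr}(k,\V):W\text{ is an ideal of }\J\text{ with }W^{\langle n\rangle}=0\},
\]
noting that both the ideal condition and the nilpotency condition are polynomial in $(\J,W)$, so $Z_k$ is closed. The first projection $Z_k\to\Jor_n$ is proper since the Grassmannian is projective, so its image is closed. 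By maximality of $\Rad(\J)$ among nilpotent ideals of $\J$ (Proposition \ref{prop:ssmaisnilpo}), this image coincides with $\{\dim\Rad\geq k\}$, and specialization yields \ref{Rad}. Once this algebro-geometric lemma is in place the proposition assembles.
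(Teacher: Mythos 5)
Your overall strategy --- reduce each item to the assertion that a suitable locus in $\Jor_n$ is Zariski-closed and constant on $\G$-orbits --- is the right one, and for items \ref{Ann}, \ref{potencia}, \ref{identidade} and \ref{dimZ2} your arguments are essentially complete; indeed your treatment of \ref{dimZ2} is the same rank-semicontinuity computation as the paper's, which is the only item the paper proves in detail (it cites \cite{kashubamartin} for the rest). The first thing to repair is your repeated appeal to ``the one-parameter family from \eqref{matrix} that realizes $\J\to\J_1$.'' The paper only proves that such a family \emph{produces} a point of $\overline{\J^{\G}}$; it is not part of the definition, nor proved anywhere, that every point of $\overline{\J^{\G}}$ arises by specializing such a family, so you may not assume one exists. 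For \ref{Ann}, \ref{potencia} and \ref{dimZ2} this is harmless: once the locus is Zariski-closed and $\G$-stable and contains $\J^{\G}$, it contains $\overline{\J^{\G}}\ni\J_1$, no curve needed. But your proof of \ref{direct_sum} genuinely uses the families; replace it by observing that the direct-sum map $\Jor_n\times\Jor_{n'}\to\Jor_{n+n'}$ is a morphism carrying $\J^{\G}\times\J'^{\G}$ into $(\J\oplus\J')^{\G}$, and a continuous map sends the closure of a set into the closure of its image.

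The more serious gap is in \ref{Rad}: the properness argument does not deliver what you need over $\mathbb{R}$, which is the ground field here. Completeness of the Grassmannian guarantees Zariski-closed images over an algebraically closed field; over $\mathbb{R}$ it only gives a Euclidean-closed semialgebraic image (the image of the Zariski-closed set $\left\{(x,[s:t]) : xs^{2}=t^{2}\right\}\subseteq\mathbb{R}\times\mathbb{P}^{1}$ under the first projection is $[0,\infty)$, which is not Zariski-closed). Since deformations are defined via the \emph{Zariski} closure of orbits --- which over $\mathbb{R}$ can be strictly larger than the Euclidean closure --- Euclidean-closedness of $\{\dim\Rad\geq k\}$ does not suffice. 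A correct route in characteristic $0$ is Albert's criterion: $\Rad(\J)$ is the radical of the generic trace form $\tau(x,y)=\operatorname{tr}(R_{x\cdot y})$, where $R_z$ is multiplication by $z$ (see \cite{jacobson}), so $\dim\Rad(\J)=n-\operatorname{rank}\tau$ and \ref{Rad} becomes another instance of your rank-semicontinuity argument. (There is also a small slip even granting closedness: the image of your $Z_k$ is the set of algebras possessing a nilpotent ideal of dimension exactly $k$, which need not coincide with $\{\dim\Rad\geq k\}$; you should take the union of these images over all $j\geq k$.) Finally, note that your argument for \ref{Aut} is also more delicate over $\mathbb{R}$ than you acknowledge: for a semialgebraic set $S$, the complement $\overline{S}^{\mathrm{Zar}}\setminus S$ can have the same dimension as $S$ (already for the orbits of $\mathbb{R}^{*}$ acting on $\mathbb{R}$ by $t\cdot x=t^{2}x$), so ``the boundary is a proper closed subset of an irreducible variety'' does not by itself yield the strict inequality; some additional input, e.g. passing to the complexification, is required.
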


\begin{proof}
For the proof of \ref{Aut} to \ref{identidade} we refer to
\cite{kashubamartin}. To see \ref{dimZ2}, let $\{e_1,e_2,\cdots,e_n\}$ be a basis for $\J$ and $h\in Z^2(\J,\J)$. Then $h$ is completely define by the $n^{3}$ constants $\alpha_{ij}^{k}\in\mathbb{R}$
given by $h(e_{i},e_{j})=\sum_{k=1}^{n}\alpha_{ij}^{k}e_{k}$. 

Linearizing 
the identity of $2$-cocycle \eqref{factorset}
we have:
\begin{eqnarray*}
&&(h(x,y)w)z+(h(x,z)w)y+(h(y,z)w)x+h((xy)w,z)+h((xz)w,y)+\\
&&+h((yz)w,x)+h(xy,w)z+h(xz,w)y+h(yz,w)x=(xy)h(w,z)+\\
&&+(xz)h(w,y)+(yz)h(w,x)+h(x,y)(wz)+h(x,z)(wy)+h(y,z)(wx)+\\
&&+h(xy,wz)+h(xz,wy)+h(yz,wx).\\
\end{eqnarray*}
Computing this identity and the commutative condition of $h$ in the basis, it results in $l(n)=\left(n^{4}+\frac{n(n-1)}{2}\right)n$
equations having  $\alpha_{ij}^{k}$ as unknowns. Then $\dim Z^{2}(\J,\J)=n^{3}-\operatorname{rank}(P_{l(n),n^{3}})$,
where $P_{l(n),n^{3}}$ represent the matrix of the system of equations, thus $\dim Z^{2}(\J,\J)\geq s$ is equivalent to the fact that all $(n^3-s+1)$-minors of $P_{l(n),n^{3}}$ vanish. Then the set $\left\{ \J\in\Jor_{n}\mid\dim Z^{2}(\J,\J)\geq s\right\} $
is Zariski-closed.

\end{proof}

\section{Real Jordan algebras of small dimensions \label{sec:3-Jordan-algebras}}

In this section we present the lists of all one and two dimensional
indecomposable Jordan algebras over $\mathbb{R}$. Further we describe all  non-isomorphic,
three-dimensional real Jordan algebras. 

Recall that  $\J=\J_{ss}\oplus N$, where $N$ denotes the radical
of $\J$ and $\J_{ss}$ is semi-simple. We will denote by $e_{i}$
the elements in $\J_{ss}$ and by $n_{i}$ the ones which belong to
$N$. Henceforth, for
convenience we drop $\cdot$ and denote the multiplication in $\J$ simply 
as $xy$.

\subsection{Real Jordan algebras of dimension one\label{sec:apenddim1}}

There are two non-isomorphic one-dimensional real Jordan algebras: the
simple algebra $\mathbb{R}e$, with $e^{2}=e$ and the nilpotent
algebra $\mathbb{R}n$, where $n^{2}=0$.

\subsection{Real Jordan algebras of dimension two\label{sec:apenddim2}}

In \cite{dim2r} all two dimensional real Jordan algebras are described. Using their list 
we have the following $4$ indecomposable algebras:

\begin{longtable}{|c|c|c|}
\hline 
$\B$ & Multiplication Table & Observation\tabularnewline
\hline
\endfirsthead
\hline
$\B$ & Multiplication Table & Observation\tabularnewline
\endhead
\hline 
$\B_{1}$ & $e_{1}^{2}=e_{1}\quad e_{1}n_{1}=n_{1}\quad n_{1}^{2}=0$ & associative\tabularnewline
\hline 
$\B_{2}$ & $e_{1}^{2}=e_{1}\quad e_{1}n_{1}=\frac{1}{2}n_{1}\quad n_{1}^{2}=0$ & \tabularnewline
\hline 
$\B_{3}$ & $n_{1}^{2}=n_{2}\quad n_{1}n_{2}=0\quad n_{2}^{2}=0$ & associative, nilpotent\tabularnewline
\hline 
$\B_{4}$ & $e_{1}^{2}=e_{1}\quad e_{1}e_{2}=e_{2}\quad e_{2}^{2}=-e_{1}$ & associative, simple\tabularnewline
\hline

\caption{Indecomposable two-dimensional Jordan algebras over $\mathbb{R}$.}
\end{longtable}

\subsection{Real Jordan algebras of dimension three\label{sec:apenddim3}}
 
The description of three-dimensional real Jordan algebras is organized according to
the dimension of the radical and subsequently the possible values
of the nilpotency type. Also for each algebra we calculate the dimensions
of its automorphism group $\Aut(\T)$ and the annihilator
$\Ann(\T)$. 

We denote by $\J^{\#}=\J\oplus\mathbb{R}1$ the Jordan algebra obtained
by formal adjoining of the identity element $1$ of $\mathbb{R}$.

\subsubsection{Semisimple Jordan algebras}

Any simple Jordan algebra can be considered as a central simple algebra
over its centroid which is a field, see \cite[p.13]{Schafer}. As a consequence we can reduce the problem of classify finite dimensional simple Jordan algebras over $\mathbb{R}$ to the problem of classify central simple ones over a finite extension of $\mathbb{R}$, i. e. $\mathbb{C}$. Thus, by Theorem
\ref{thm:dimn0} if $\T$ is a simple Jordan algebra of dimension $\leq3$ over $\mathbb{R}$ then:
\begin{enumerate}[label=\roman*)]
 \item $\T=\mathbb{R}e$ of dimension one,
 \item $\T=\B_4$ of dimension two (that is the field of the complex numbers),
 \end{enumerate}
And if $\V$ is a two-dimensional real vector space with basis $\{e_1,e_2\}$ then:
\begin{enumerate}[resume,label=\roman*)]
 \item $\T=\J(\V,f_1)$ of dimension three, where $f_1$ is the non-degenerate symmetric bilinear form: $f_1(e_1,e_1)=1$, $f_1(e_2,e_2)=-1$ and $f_1(e_1,e_2)=0$,
 \item $\T=\J(\V,f_2)$ of dimension three, where $f_2$ is given by: $f_2(e_i,e_i)=-1$ for $i=1,2$ and $f_2(e_1,e_2)=0$,
 \item $\T=\J(\V,f_3)$ of dimension three, where $f_3$ is given by: $f_3(e_i,e_i)=1$ for $i=1,2$ and $f_3(e_1,e_2)=0$.
\end{enumerate}
Considering direct sum of these algebras we obtain all semi-simple real Jordan algebras of dimension three:

\begin{longtable}{|>{\centering}m{0.32cm}|>{\centering}m{5.3cm}|>{\centering}m{1.03cm}|>{\centering}m{1.04cm}|>{\centering}m{1.8cm}|}

\hline 
$\J$ & Multiplication Table & $\dim$ $\operatorname*{Aut}(\J)$ & $\dim$ $\operatorname*{Ann}(\J)$ & Observation\tabularnewline
\hline 
\endfirsthead
\hline
$\J$ & Multiplication Table & $\dim$ $\operatorname*{Aut}(\J)$ & $\dim$ $\operatorname*{Ann}(\J)$ & Observation\tabularnewline
\endhead
\hline 
$\T_{1}$ & $\mathbb{R}e_{1}\oplus\mathbb{R}e_{2}\oplus\mathbb{R}e_{3}$ & $0$ & $0$  & associative unitary\tabularnewline
\hline 
$\T_{2}$ & $\B_{4}\oplus\mathbb{R}e_{3}$ & $0$ & $0$  & associative unitary\tabularnewline
\hline 
$\T_{3}$ & $e_{2}^{2}=e_{1}\quad e_{3}^{2}=-e_{1}$ 

$e_{1}e_{i}=e_{i}\;$\scriptsize{$i=1,2,3$} & $1$ & $0$  & unitary $\J(\V,f_1)$\tabularnewline
\hline 
$\T_{4}$ & $e_{2}^{2}=e_{3}^{2}=-e_{1}\quad e_{1}e_{i}=e_{i}\;$\scriptsize{$i=1,2,3$} & $1$ & $0$  & unitary $\J(\V,f_2)$ \tabularnewline
\hline 
$\T_{5}$ & $e_{2}^{2}=e_{3}^{2}=e_{1}\quad e_{1}e_{i}=e_{i}\;$\scriptsize{$i=1,2,3$} & $1$ & $0$  & unitary $\J(\V,f_3)$\tabularnewline
\hline 
\caption{Three-dimensional semi-simple Jordan algebras over $\mathbb{R}$.}
\end{longtable}

\subsubsection{Jordan algebras with one-dimensional radical}

Consider $\T$ a real Jordan algebra of dimension three with $\dim N=1$. Thus $\J_{ss}$ is two-dimensional and by
Sections \ref{sec:apenddim1} and \ref{sec:apenddim2} we have the following possibilities:

\vspace{0.1cm}

\noindent \textbf{1). $\J_{ss}=\mathbb{R}e_{1}\oplus\mathbb{R}e_{2}$}.
Then $\J^{\#}=\J\oplus\mathbb{R}1$ contains $3$
orthogonal idempotents $e_{1}$, $e_{2}$ and $e_{0}=1-e_{1}-e_{2}$,
so using the Peirce decomposition \eqref{eq:decopierce} we have: 
\[
\J=\Pe_{00}\oplus\Pe_{01}\oplus\Pe_{02}\oplus\Pe_{11}\oplus\Pe_{12}\oplus\Pe_{22},
\]
and the corresponding decomposition of the ideal $N$: 
\[
N=N_{00}\oplus N_{01}\oplus N_{02}\oplus N_{11}\oplus N_{12}\oplus N_{22},
\]
where $N_{ij}=N\cap\Pe_{ij}$. Let $n_{1}$ be a basis of $N$, then 
$\J$ is completely defined by the subspace $N_{ij}$ to which belongs $n_1$.
Thus $\T$ is one of the following algebras:

\begin{longtable}{|>{\centering}m{0.32cm}|>{\centering}m{5.3cm}|>{\centering}m{1.03cm}|>{\centering}m{1.04cm}|>{\centering}m{1.8cm}|}
\hline 
$\J$ & Multiplication Table & $\dim$ $\operatorname*{Aut}(\J)$ & $\dim$ $\operatorname*{Ann}(\J)$ & Observation\tabularnewline
\hline 
\endfirsthead
\hline 
$\J$ & Multiplication Table & $\dim$ $\operatorname*{Aut}(\J)$ & $\dim$ $\operatorname*{Ann}(\J)$ & Observation\tabularnewline
\endhead
\hline 
$\T_{6}$ & $\mathbb{R}e_{1}\oplus\mathbb{R}e_{2}\oplus\mathbb{R}n_{1}$ & $1$ & $1$ & associative $n_{1}\in N_{00}$\tabularnewline
\hline 
$\T_{7}$ & $\B_{2}\oplus\mathbb{R}e_{2}$ & $2$ & $0$ & $n_{1}\in N_{01}$\tabularnewline
\hline 
$\T_{8}$ & $e_{i}^{2}=e_{i}\quad e_{i}n_{1}=\frac{1}{2}n_{1}\,$\scriptsize{$i=1,2$} & $2$& $0$ & unitary $n_{1}\in N_{12}$\tabularnewline
\hline 
$\T_{9}$ & $\B_{1}\oplus\mathbb{R}e_{2}$ & $1$ & $0$ & associative unitary $n_{1}\in N_{11}$\tabularnewline
\hline

\caption{Three-dimensional Jordan algebras over $\mathbb{R}$ with one-dimensional radical and
semi-simple part $\J_{ss}=\mathbb{R}e_{1}\oplus\mathbb{R}e_{2}$.}
\end{longtable}

\noindent \textbf{2). $\J_{ss}=\B_{4}$}. It contains
only one idempotent $e_{1}$ which determines the following decomposition
of $N$: 
\[
N=N_{0}\oplus N_{1}\oplus N_{\frac{1}{2}}.
\]
Let $n_{1}$ be a basis of $N$. If $n_{1}\in N_{0}$, then we obtain that
$e_{2}n_{1}\in\Pe_{1}\Pe_{0}=0$. If $n_{1}\in N_{1}$,
we have $e_{2}n_{1}=\alpha n_{1}$. Substituting $\left\{ e_{1},\, e_{2},\, n_{1}\right\} $ into
the Jordan identity \eqref{eq:linearjord} we obtain
that $\alpha=0$. Finally there is no real Jordan algebra with $n_{1}\in N_{\frac{1}{2}}$.

Thus we conclude that $\T$ is one of the following algebras:

\begin{longtable}{|>{\centering}m{0.32cm}|>{\centering}m{5.3cm}|>{\centering}m{1.03cm}|>{\centering}m{1.04cm}|>{\centering}m{1.8cm}|}

\hline 
$\J$ & Multiplication Table & $\dim$ $\operatorname*{Aut}(\J)$ & $\dim$ $\operatorname*{Ann}(\J)$ & Observation\tabularnewline
\hline 
\endfirsthead
\hline 
$\J$ & Multiplication Table & $\dim$ $\operatorname*{Aut}(\J)$ & $\dim$ $\operatorname*{Ann}(\J)$ & Observation\tabularnewline
\endhead
\hline 
$\T_{10}$ & $\B_{4}\oplus\mathbb{R}n_{1}$ & $1$ & $1$ & associative $n_{1}\in N_{0}$ \tabularnewline
\hline 
$\T_{11}$ & $e_{2}^{2}=-e_{1}\quad e_{1}n_{1}=n_{1}$

$e_{1}e_{i}=e_{i}\;$\scriptsize{$i=1,2$} & $2$ & $0$ & unitary $n_{1}\in N_{1}$ \tabularnewline
\hline 

\caption{Three-dimensional Jordan algebras over $\mathbb{R}$ with one-dimensional radical and
semi-simple part $\J_{ss}=\B_{4}$.}

\end{longtable}

\subsubsection{Jordan algebras with two-dimensional radical}
Now, suppose that $\T$ is a three-dimensional real Jordan algebra with $\dim N=2$. The only semi-simple one-dimensional Jordan algebra is $\J_{ss}=\mathbb{R}e_{1}$,
therefore we have the following Peirce decomposition of $N$: 
\[
N=N_{0}\oplus N_{\frac{1}{2}}\oplus N_{1}\text{.}
\]
The ideal $N$ may have two nilpotency types: $(2)$ or $(1,1).$

\vspace{0.1cm}

\noindent \textbf{1). Nilpotency type $(2)$.} Then $N^{2}=0$. Let
$\{n_{1},n_{2}\}$ be a basis of $N$, then it is enough to choose
to which Pierce components belong $n_{1}$ and $n_{2}$. Thus $\T$ is one of the following algebras: 

\begin{longtable}{|>{\centering}m{0.32cm}|>{\centering}m{4.1cm}|>{\centering}m{1.03cm}|>{\centering}m{1.04cm}|>{\centering}m{3cm}|}

\hline 
$\J$ & Multiplication Table & $\dim$ $\operatorname*{Aut}(\J)$ & $\dim$ $\operatorname*{Ann}(\J)$ & Observation\tabularnewline
\hline 
\endfirsthead
\hline 
$\J$ & Multiplication Table & $\dim$ $\operatorname*{Aut}(\J)$ & $\dim$ $\operatorname*{Ann}(\J)$ & Observation\tabularnewline
\endhead
\hline 
$\T_{12}$ & $e_{1}^{2}=e_{1}\quad e_{1}n_{i}=\frac{1}{2}n_{i}\;$

\scriptsize{$i=1,2$} & $6$ & $0$ & $n_{1},\, n_{2}\in N_{\frac{1}{2}}$\tabularnewline
\hline 
$\T_{13}$ & $e_{1}^{2}=e_{1}\quad e_{1}n_{i}=n_{i}\;$\scriptsize{$i=1,2$} & $4$ & $0$ & associative, unitary $n_{1},\, n_{2}\in N_{1}$\tabularnewline
\hline 
$\T_{14}$ & $\B_{2}\oplus\mathbb{R}n_{2}$ & $3$ & $1$ & $n_{2}\in N_{0},\, n_{1}\in N_{\frac{1}{2}}$\tabularnewline
\hline 
$\T_{15}$ & $\B_{1}\oplus\mathbb{R}n_{2}$ & $2$ & $1$ & associative $n_{2}\in N_{0},\, n_{1}\in N_{1}$\tabularnewline
\hline 
$\T_{16}$ & $e_{1}^{2}=e_{1}\quad e_{1}n_{1}=\frac{1}{2}n_{1}$

$\quad e_{1}n_{2}=n_{2}$ & $3$ & $0$ & $n_{1}\in N_{\frac{1}{2}},\, n_{2}\in N_{1}$\tabularnewline
\hline 
$\T_{17}$ & $\mathbb{R}e_{1}\oplus\mathbb{R}n_{1}\oplus\mathbb{R}n_{2}$ & $4$ & $2$ & associative $n_{1},\, n_{2}\in N_{0}$ \tabularnewline
\hline 

\caption{Three-dimensional Jordan algebras over $\mathbb{R}$ with two-dimensional radical of nilpotency
type $(2)$. }
\end{longtable}

\noindent \textbf{2).} \textbf{Nilpotency type $(1,1)$.} There exists
$n\in N$ such that $N=\mathbb{R}n+\mathbb{R}n^{2}$ with $n^{3}=0$.
Suppose, firstly, that $N=N_{i}$ then $n^{2}\in N_{i}^{2}\subseteq N_{0}\oplus N_{1}$,
that implies $i=0$ or $i=1$. Now, suppose that $N=N_{i}\oplus N_{\frac{1}{2}}$
with $i=0,1$ and $\dim N_{i}=\dim N_{\frac{1}{2}}=1$, we can choose
$n$ as an element of $N_{\frac{1}{2}}$. In fact if $N_{i}=\mathbb{R}a$
and $N_{\frac{1}{2}}=\mathbb{R}b$, then we have $b^{2}\in N_{i}$
thus $b^{2}=\alpha a$ for some $\alpha\in\mathbb{R}$. Note that
$\alpha\neq0$ since by nilpotency $a^{2}=ab=0$. Consequently $N=\mathbb{R}b\oplus\mathbb{R}b^{2}$.

Finally, if $N=N_{0}\oplus N_{1}$, then by nilpotency $N_{0}^{2}=N_{1}^{2}=0$.
Moreover $N_{0}N_{1}=0$ and thus $N^{2}=0$ leads to contradiction. 

Therefore we obtain that $\T$ is one of the following algebras,
where $n_{1}^{2}=n_{2}$.

\begin{longtable}{|>{\centering}m{0.32cm}|>{\centering}m{4.1cm}|>{\centering}m{1.03cm}|>{\centering}m{1.04cm}|>{\centering}m{3cm}|}

\hline 
$\J$ & Multiplication Table & $\dim$ $\operatorname*{Aut}(\J)$ & $\dim$ $\operatorname*{Ann}(\J)$ & Observation\tabularnewline
\hline 
\endfirsthead
\hline 
$\J$ & Multiplication Table & $\dim$ $\operatorname*{Aut}(\J)$ & $\dim$ $\operatorname*{Ann}(\J)$ & Observation\tabularnewline
\endhead
\hline 
$\T_{18}$ & $e_{1}^{2}=e_{1}\quad n_{1}^{2}=n_{2}$

$ e_{1}n_{i}=n_{i}\;$\scriptsize{$i=1,2$} & $2$ & $0$ & associative, unitary $n_{1},\, n_{2}\in N_{1}$\tabularnewline
\hline 
$\T_{19}$ & $e_{1}^{2}=e_{1}\quad n_{1}^{2}=n_{2}$

$e_{1}n_{1}=\frac{1}{2}n_{1}$ & $2$ & $1$ & $n_{1}\in N_{\frac{1}{2}},$

$n_{2}\in N_{0}$\tabularnewline
\hline 
$\T_{20}$ & $e_{1}^{2}=e_{1}\quad n_{1}^{2}=e_{1}n_{2}=n_{2}$

$ e_{1}n_{1}=\frac{1}{2}n_{1}$ & $2$ & $0$ & $n_{1}\in N_{\frac{1}{2}},$

$n_{2}\in N_{1}$\tabularnewline
\hline 
$\T_{21}$ & $\B_{3}\oplus\mathbb{R}e_{1}$ & $2$ & $1$ & associative $n_{1},\, n_{2}\in N_{0}$\tabularnewline
\hline

\caption{Three-dimensional Jordan algebras over $\mathbb{R}$ with two-dimensional radical of nilpotency
type $(1,1)$.}

\end{longtable}

\subsubsection{Nilpotent Jordan algebras}

We have the following nilpotency types for $\J$:

\noindent \textbf{1).} \textbf{Nilpotency type $(3)$.} Then $\J^{2}=0$.

\begin{longtable}{|>{\centering}m{0.32cm}|>{\centering}m{5.3cm}|>{\centering}m{1.03cm}|>{\centering}m{1.04cm}|>{\centering}m{1.8cm}|}

\hline 
$\J$ & Multiplication Table & $\dim$ $\operatorname*{Aut}(\J)$ & $\dim$ $\operatorname*{Ann}(\J)$ & Observation\tabularnewline
\hline 
\endfirsthead
\hline 
$\J$ & Multiplication Table & $\dim$ $\operatorname*{Aut}(\J)$ & $\dim$ $\operatorname*{Ann}(\J)$ & Observation\tabularnewline
\endhead
\hline 
$\T_{22}$ & $\mathbb{R}n_{1}\oplus\mathbb{R}n_{2}\oplus\mathbb{R}n_{3}$ & $9$ & $3$ & associative \tabularnewline
\hline

\caption{Three-dimensional nilpotent real Jordan algebra of nilpotency type $(3)$.}
\end{longtable}
\textbf{2). Nilpotency type $(1,1,1)$.} Then $\J^{\left\langle 4\right\rangle }=0$,
$\dim\J^{3}=1$ and $\dim\J^{2}=2$. We claim that there exists $n\in\J$
such that $\J=\mathbb{R}n+\mathbb{R}n^{2}+\mathbb{R}n^{3}$. Indeed, 
let $n_{1}$ be a basis for $\J^{3}$,  then complete it to a basis of $\J^2$ and $\J$, there is  $n_{2}\in\J^{2}$
and $n_{3}\in\J$, such that $\J^{2}=\mathbb{R}n_{1}+\mathbb{R}n_{2}$
and $\J=\mathbb{R}n_{1}+\mathbb{R}n_{2}+\mathbb{R}n_{3}$ where
\begin{gather*}
n_{1}^{2},\, n_{1}n_{2},\, n_{1}n_{3}\in\J^{\left\langle 4\right\rangle }=0,\quad n_{3}^{2}=\alpha n_{1}+\beta n_{2},\mbox{\ since }n_{3}^{2}\in\J^{2},\\
n_{2}n_{3}=\gamma n_{1}\mbox{ and }n_{2}^{2}=\delta n_{1},\mbox{\  since }n_{2}n_{3},\, n_{2}^{2}\in\J^{3}.
\end{gather*}

Substituting  $\left\{ n_{1},\, n_{2},\, n_{3}\right\} $  into
the Jordan identity \eqref{eq:linearjord} we obtain
either $\beta=0$ or $\delta=0$. But if $\beta=0$ then $\dim\J^{2}=1$, 
thus we have $\beta\neq0$ and $\delta=0$, analogously $\gamma\neq0$.
Then $n=n_3$ and $\{n,n^2,n^3\}$ is the desired basis. Thus we obtain the following algebra:

\begin{longtable}{|>{\centering}m{0.32cm}|>{\centering}m{5.3cm}|>{\centering}m{1.03cm}|>{\centering}m{1.04cm}|>{\centering}m{1.8cm}|}

\hline 
$\J$ & Multiplication Table & $\dim$ $\operatorname*{Aut}(\J)$ & $\dim$ $\operatorname*{Ann}(\J)$ & Observation\tabularnewline
\hline 
\endfirsthead
\hline 
$\J$ & Multiplication Table & $\dim$ $\operatorname*{Aut}(\J)$ & $\dim$ $\operatorname*{Ann}(\J)$ & Observation\tabularnewline
\endhead
\hline 
$\T_{23}$ & $n_{2}n_{3}=n_{1}\quad n_{3}^{2}=n_{2}$ & $3$ & $1$ & associative\tabularnewline
\hline 

\caption{Three-dimensional nilpotent  real Jordan algebra of nilpotency type $(1,1,1)$.}
\end{longtable}

\textbf{3).} \textbf{Nilpotency type $(2,1)$.} We observe that
$\J^{3}=0$ and $\dim\J^{2}=1$. Let $n_{3}$ be a basis of $\J^{2}$, complete it to a basis of $\J$
choosing some $n_{1},\: n_{2}\in\J$, we have $\J=\mathbb{R}n_{1}+\mathbb{R}n_{2}+\mathbb{R}n_{3}$
with
\begin{equation}
  \begin{split}
 &n_{3}^{2},\: n_{1}n_{3},\, n_{2}n_{3}\in \J^{3}=0,\\
&n_{1}^{2}=\alpha n_{3},\, n_{2}^{2}=\beta n_{3},\, n_{1}n_{2}=\gamma n_{3}.
  \end{split}\label{produto}
\end{equation}
If $(\J,\cdot)$ is an algebra with basis $\{N_1,N_2,N_3\}$ whose multiplication $\cdot$ satisfies \eqref{produto} then for any $\alpha,\beta,\gamma\in\mathbb{R}$, $\J$ is a Jordan algebra, but at least one of them has to be not null. We claim that $\J$ is one of the following algebras:

\begin{longtable}{|>{\centering}m{0.32cm}|>{\centering}m{5.3cm}|>{\centering}m{1.03cm}|>{\centering}m{1.04cm}|>{\centering}m{1.8cm}|}

\hline 
$\J$ & Multiplication Table & $\dim$ $\operatorname*{Aut}(\J)$ & $\dim$ $\operatorname*{Ann}(\J)$ & Observation\tabularnewline
\endfirsthead
$\J$ & Multiplication Table & $\dim$ $\operatorname*{Aut}(\J)$ & $\dim$ $\operatorname*{Ann}(\J)$ & Observation\tabularnewline
\endhead
\hline 
$\T_{24}$ & $n_{1}^{2}=n_{2}^{2}=n_{3}$ & $4$ & $1$ & associative\tabularnewline
\hline 
$\T_{25}$ & $\B_{3}\oplus\mathbb{R}n_{3}$ & $5$ & $2$ & associative \tabularnewline
\hline 
$\T_{26}$ & $n_{1}n_{2}=n_{3}$ & $4$ & $1$ & associative\tabularnewline
\hline

\caption{Three-dimensional nilpotent real Jordan algebras of nilpotency type $(2,1)$.}
\end{longtable}

Indeed, let $\beta=0$ then, if $\gamma\neq0$ we obtain that $\T\simeq\T_{26}$
where the isomorphism is given by $N_{1}\mapsto n_{1}+\frac{\alpha}{2\gamma}n_{2}$, $N_2\mapsto n_2$ and $N_{3}\mapsto\gamma^{-1}n_{3}$. 
If $\gamma=0$, necessarily $\alpha\neq0$ and 
$\T\simeq\T_{25}$ with change of basis: $N_1\mapsto n_1$,
$N_{2}\mapsto n_{3}$ and $N_{3}\mapsto\alpha^{-1}n_{2}$. 

On the other hand, denote $-\alpha\beta+\gamma^{2}$ by $\Delta$ and suppose $\beta\neq0$, then:
\begin{enumerate}[label=\roman*]
\item If $\Delta>0$, again $\T\simeq\T_{26}$ via $N_{1}\mapsto\left(\frac{1}{2}+\frac{\gamma}{2\sqrt{\Delta}}\right)n_{1}+\left(-\frac{1}{2}+\frac{\gamma}{2\sqrt{\Delta}}\right)n_{2}$,
$N_{2}\mapsto\frac{\beta}{2\sqrt{\Delta}}n_{1}+\frac{\beta}{2\sqrt{\Delta}}n_{2}$
and $N_{3}\mapsto\frac{\beta}{2\Delta}n_{3}$.

\item If $\Delta<0$, in this case $\T\simeq\T_{24}$ via 
$N_{1}\mapsto n_{1}+\frac{\gamma}{\sqrt{-\Delta}}n_{2}$, $N_{2}\mapsto\frac{\beta}{\sqrt{-\Delta}}n_{2}$
and $N_{3}\mapsto-\frac{\beta}{\Delta}n_{3}$. 

\item If $\Delta=0$ then $\T\simeq\T_{25}$ in both cases: if $\gamma=0$ with
isomorphism $N_{1}\mapsto n_{3}$, $N_{2}\mapsto n_{1}$
and $N_{3}\mapsto\beta^{-1}n_{2}$, and if $\gamma\neq0$ with isomorphism given by $N_{1}\mapsto n_{1}+n_{3}$,
$N_{2}\mapsto\alpha^{-1}\gamma n_{1}$ and $N_{3}\mapsto\alpha^{-1}n_{2}$.
\end{enumerate}

\subsection{Remarks\label{sec:Remarks}}

 We prove that given $\T$ a three-dimensional Jordan algebra over $\mathbb{R}$ then $\T$ is one of the algebras $\T_1$ to $\T_{26}$. To complete the algebraic classification just remains to prove that all them are pairwise non-isomorphic. Comparing the algebra invariants,
namely $\dim\Rad(\T)$, $\dim\operatorname*{Ann}(\T)$, $\dim\operatorname*{Aut}(\T)$, nilpotency type of $\Rad(\T)$, together with properties whether $\J$ is indecomposable,
associative, non-associative, unitary, one needs only to verify whether
there exist isomorphisms between $\T_3$, $\T_4$ and $\T_5$ and between $\T_{24}$ and $\T_{26}$. First, we observe that the algebras $\T_3,\T_4$ and $\T_5$ are associated to non-degenerate symmetric bilinear forms that are non-isomorphic and therefore they are pairwise non-isomorphic.
Finally, the two-dimensional null algebra $\mathbb{R}n_1\oplus\mathbb{R}n_2$ is a subalgebra of $\T_{26}$ but is not a subalgebra of $\T_{24}$, thus $\T_{24}\not\simeq\T_{26}$.

\section{Geometric Classification\label{Geometric Classification}}
 In this section we will determine the geometric classification of three-dimensional real Jordan algebras. But, due to item \ref{direct_sum} of Proposition \ref{conditions} which gives a sufficient condition of existence of deformation between decomposable algebras, first we will describe the varieties of Jordan algebras of dimension less than three.
 The only rigid one-dimensional real Jordan algebra is the simple one 
$\mathbb{R}e$ and it is clear that  $\mathbb{R}e\to\mathbb{R}n$. Therefore, $\Jor_{1}$ is an irreducible algebraic variety of dimension $1$ with two $\G$-orbits.
In \cite{dim2r} the authors proved that $\Jor_2$ is an algebraic variety with $7$ orbits under the action of $\G$ and $3$ irreducible components given by the Zariski closure of the orbits of the algebras $\mathbb{R}e_1\oplus\mathbb{R}e_2$, $\B_2$ and $\B_4$. The deformations between the algebras in $\Jor_2$ are represented in Figure \ref{fig:orbitasJor2Reais}.

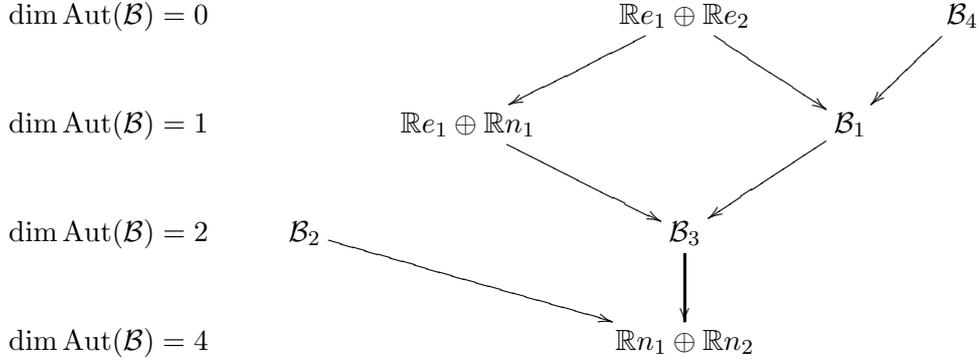
\begin{figure}[H]
\[
\hspace{-1cm}\xymatrix{\dim\Aut(\B)=0 &  &  & \mathbb{R}e_{1}\oplus\mathbb{R}e_{2}\ar[dl]\ar[dr] &  & \B_{4}\ar[ld]\\
\dim\Aut(\B)=1 &  & \mathbb{R}e_{1}\oplus\mathbb{R}n_{1}\ar[dr] &  & \B_{1}\ar[dl]\\
\dim\Aut(\B)=2 & \B_{2}\ar[drr] &  & \B_{3}\ar[d]\\
\dim\Aut(\B)=4 &  &  & \mathbb{R}n_{1}\oplus\mathbb{R}n_{2}
}
\]

\caption{\label{fig:orbitasJor2Reais}Complete description of the $\G$-orbits of  $\Jor_{2}$ }
\end{figure}
Now, we are ready to determine the irreducible components of the variety of three dimensional real  Jordan algebras.

\begin{theorem}

\label{thm:teoprincipalemR}The variety $\Jor_{3}$ of three-dimensional real Jordan algebras is a connected affine variety of dimension $9$ with $26$ orbits under the action of $\operatorname{GL}(\V)$ and $8$ irreducible components given by Zariski closure of the orbits of the following algebras:

\[
\Omega=\left\{ \T_{1},\T_{2},\T_{3},\T_{4},\T_{5},\T_{7},\T_{12},\T_{20}\right\}.
\]
\end{theorem}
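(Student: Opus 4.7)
The plan is to combine the algebraic classification of Section \ref{sec:3-Jordan-algebras} with the deformation-theoretic tools of Section \ref{sec:Preliminaries}. The classification already produced $26$ pairwise non-isomorphic algebras $\T_{1},\ldots,\T_{26}$, so $\Jor_{3}$ has exactly $26$ $\G$-orbits. The ambient dimension $9$ is realized by $\T_{1}^{\G}$, because $\dim\Aut(\T_{1})=0$ gives $\dim\T_{1}^{\G}=\dim\G=9$. Connectedness follows by noting that the zero-product algebra $\T_{22}$ lies in the closure of every orbit: for any $\J\in\Jor_{3}$ the family $g(t)=t\cdot I$ as in \eqref{matrix} produces structure constants $t\,c_{ij}^{k}$ which degenerate at $t=0$ to the identically-zero multiplication, so every irreducible component of $\Jor_{3}$ contains $\T_{22}$.

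I would next verify that the eight algebras of $\Omega$ are rigid. The semi-simple algebras $\T_{1},\ldots,\T_{5}$ are rigid by the final assertion of Proposition \ref{cohomology}. For $\T_{7},\T_{12},\T_{20}$, I would repeat the direct cohomological computation carried out for $\T_{12}$ in Example \ref{T12rigida}: parametrize an arbitrary bilinear map $h$ obeying the $2$-cocycle identity \eqref{factorset} using the Peirce decomposition of the algebra, solve the resulting linear system for its coefficients, and exhibit a linear map $\mu$ realizing the coboundary relation \eqref{hmu}. This forces $H^{2}(\T,\T)=0$ and Proposition \ref{cohomology} applies.

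I would then construct explicit deformations from these eight rigid algebras to each of the remaining eighteen. For each target $\T_{j}\notin\Omega$, the task is to choose some $\T_{i}\in\Omega$ and a matrix $g(t)\in\mathrm{Mat}_{3}(\mathbb{R}[t])$, invertible for $t\neq 0$, so that the change of basis $g(t)$ applied to $\T_{i}$ as in \eqref{matrix} yields structure constants $c_{ab}^{c}(t)$ specializing at $t=0$ to the multiplication table of $\T_{j}$. Natural building blocks are radical rescalings $n_{i}\mapsto t\,n_{i}$ (which kill quadratic products in the radical and convert nilpotency type $(1,1)$ into $(2)$), Peirce shifts $e\mapsto e+t\,n$, and idempotent collapses $e_{i}\mapsto e_{i}+t\,e_{j}$. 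Item \ref{direct_sum} of Proposition \ref{conditions} is then used repeatedly to lift the two-dimensional deformation diagram of Figure \ref{fig:orbitasJor2Reais} to the decomposable algebras in $\Jor_{3}$, reducing the work to a small number of genuinely indecomposable cases.

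The final step is to check that the orbit closures $\overline{\T^{\G}}$ for $\T\in\Omega$ are $8$ \emph{distinct} irreducible components, equivalently that no member of $\Omega$ deforms to another. Pairs with $\dim\Aut(\T)\geq\dim\Aut(\T')$ are excluded immediately by item \ref{Aut} of Proposition \ref{conditions}. For the remaining ordered pairs, having already verified $H^{2}(\T,\T)=0$ for every $\T\in\Omega$, one has $\dim Z^{2}(\T,\T)=\dim B^{2}(\T,\T)=9-\dim\Aut(\T)$, giving $\dim Z^{2}(\T)\in\{9,8,7,3\}$ when $\dim\Aut(\T)\in\{0,1,2,6\}$; every surviving pair then violates the monotonicity inequality of item \ref{dimZ2}, so no rigid algebra deforms to another. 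I expect the main obstacle to be the bookkeeping for the eighteen explicit matrices $g(t)$: each individual deformation is elementary, but one must check they jointly dominate every non-rigid algebra while respecting the forbidden moves between components.
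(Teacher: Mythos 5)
Your overall architecture coincides with the paper's: read off the $26$ orbits from the algebraic classification, get $\dim\Jor_3=9$ from $\dim\T_1^{\G}=9-\dim\Aut(\T_1)$, get connectedness from the degeneration of every algebra onto $\T_{22}$, certify that the eight algebras of $\Omega$ are rigid, and show each of the other eighteen is dominated by a member of $\Omega$. Where you differ is in how rigidity is certified. For $\T_1,\ldots,\T_5$ you invoke the semi-simplicity clause of Proposition \ref{cohomology}, which is cleaner than the paper's route (the paper rules out potential dominators of $\T_3,\T_4,\T_5$ by hand using $\dim\Aut$ and the preservation of associativity under deformation). For $\T_7$ and $\T_{20}$ you propose a direct computation of $H^2=0$, whereas the paper excludes every candidate dominator via items \ref{Aut}, \ref{Rad}, \ref{identidade} and \ref{dimZ2} of Proposition \ref{conditions} (e.g. $\dim Z^2(\T_i,\T_i)=8$ for $i=3,4,5$ against $\dim Z^2(\T_7,\T_7)=7$). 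Your route is viable --- the paper's final remarks confirm that $H^2(\J,\J)=0$ for every $\J\in\Omega$ --- but it trades three short invariant comparisons for two further cocycle computations in the style of Example \ref{T12rigida}, which you do not carry out. Your closing step (no member of $\Omega$ dominates another, via $\dim Z^2=9-\dim\Aut$ when $H^2=0$ together with items \ref{Aut} and \ref{dimZ2}) is correct but redundant: once each orbit in $\Omega$ is Zariski-open, two distinct open orbits cannot share a closure, so the eight closures are automatically distinct components.

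The one substantive piece you leave unexecuted is the eighteen dominations, and this is not mere bookkeeping: several of the required degenerations are not instances of your three ``building blocks.'' For example, the paper realizes $\T_{15}\to\T_{23}$ (a non-nilpotent algebra degenerating onto the nilpotent algebra of type $(1,1,1)$) via $A_t=t^2n_2$, $B_t=tn_1-tn_2$, $C_t=te_1+n_1+n_2$, and $\T_5\to\T_{19}$ via $A_t=\tfrac12 e_1-\tfrac12 e_2$, $B_t=te_3$, $C_t=\tfrac{t^2}{2}e_1+\tfrac{t^2}{2}e_2$; neither is a radical rescaling, a Peirce shift, or an idempotent collapse, and each must be engineered to land on a specific isomorphism class among the degenerations permitted by Proposition \ref{conditions}. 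Your plan to lift the two-dimensional picture through item \ref{direct_sum} and to import degenerations from \cite{dim2r} and \cite{irynashesta} matches what the paper actually does and correctly reduces the workload to a handful of indecomposable cases; but until the remaining matrices $g(t)$ are exhibited and checked, the second half of the theorem remains a strategy rather than a proof.
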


\begin{proof}
For any $\T\in\Jor_3$, the change of basis
\[
g(t)=\begin{bmatrix}t & 0 & 0 \\
0 & t & 0 \\
0 & 0 & t 
\end{bmatrix},
\]
gives $\T\to\T_{22}$ then the orbit $\T_{22}^{\G}$ belongs to any irreducible component of $\Jor_3$ and hence 
$\Jor_3$ is a connected affine variety. The fact that it has $26$ $\G$-orbits follows of the algebraic classification in Section \ref{sec:3-Jordan-algebras}. Then, 
$\Jor_{3}=\bigcup_{i=1}^{26} \T_{i}^{\G}$ is a finite union of orbits which are locally closed sets, thus 
\[
\dim\Jor_{3}=\max_{1\leq i\leq26}\left\{ \dim\T_{i}^{\G}\right\} =\dim\T_{1}^{\G}=3^{2}-\dim\Aut(\T_{1})=9.
\]

We will divide the rest of the proof in two parts. Firstly, we will show that all algebras in $\Omega$ are rigid and then, we will prove that there is no other rigid algebra in $\Jor_3$, that is, every structure from $\T_{1}$ to $\T_{26}$ in the algebraic 
classification is dominated by one of the algebras from $\Omega$.

The algebras $\T_{1}$ and $\T_{2}$ have $\dim Aut(\T_{i})=0$ then,
by Proposition \ref{conditions}\ref{Aut}, no other algebra in $\Jor_3$ can be a deformation of them. Therefore $\T_{1}$ and $\T_{2}$ are rigid algebras. By the same argument the only algebras that could be a deformation of $\T_{3}$,
$\T_{4}$ and $\T_{5}$ are $\T_{1}$ and $\T_{2}$ but,
by Proposition \ref{conditions}\ref{identidade}, an associative algebra could not be a deformation of a non-associative one. Thus $\T_{3}$,
$\T_{4}$ and $\T_{5}$ also are rigid algebras.

There is no algebra in $\Jor_{3}$ which dominates $\T_{7}$: by 
Proposition \ref{conditions}\ref{Rad} the only possible candidates to be deformations of $\T_{7}$
are those whose $\dim\Rad(\T_{i})\leq1$, that is, the algebras $\T_{1}$ to $\T_{11}$. By Proposition
\ref{conditions}\ref{identidade}, we may exclude $\T_{1}$,
$\T_{2}$, $\T_{6}$, $\T_{9}$ and $\T_{10}$ from the list. Since $\dim\Aut(\T_{i})=2$,
for $i=7,8,11$ neither $\T_{8}$ or $\T_{11}$ could dominate $\T_{7}$ due to Proposition \ref{conditions}\ref{Aut}. The dimension of the $2$-cocycle groups of the three remainder algebras, $\T_{i}$ with $i=3,4,5$, is $8$ while $\dim Z^{2}(\T_{7},\T_{7})=7$,
thus  by Proposition \ref{conditions}\ref{dimZ2} none of them dominates $\T_{7}$. We conclude that $\T_{7}$ is rigid.

It follows from Example \ref{T12rigida} that the algebra $\T_{12}$ is rigid.

For the proof of the rigidness of $\T_{20}$ we use the same arguments as for $\T_{7}$: Since $\dim\Rad(\T_{20})=2$ no nilpotent algebra, i.e. $\T_{i}$ for  $22\leq i\leq26$, is a deformation of $\T_{20}$. By the argument of dimension of the automorphism group, the algebras $\T_{i}$ for $i=7,8,11,\cdots,21$ do not dominate 
$\T_{20}$. Also we can exclude from the list the associative algebras 
$\T_{1}$, $\T_{2}$, $\T_{6}$, $\T_{9}$ and $\T_{10}$ because 
$\T_{20}$ is non-associative.  Finally, $\dim Z^{2}(\T_{20},\T_{20})=7$
implies that $\T_{i}$ for $i=3,4,5$ are not deformations of $\T_{20}$ and 
therefore the algebra $\T_{20}$ is rigid.

It remains to show that for any algebra $\T_{i}$ for $i=1,\cdots,26$ there is an algebra
$\T\in\Omega$ such that $\T$ is a deformation of $\T_{i}$. In what follows all transformations are given using the basis from Section \ref{sec:3-Jordan-algebras}.

Firstly, the orbits of the algebras $\T_{6}$, $\T_{21}$ and $\T_{24}$,
are contained in $\overline{\T_{1}^{\G}}$: combining the deformation between two-dimensional real Jordan algebras obtained in \cite{dim2r} with
Proposition \ref{conditions}\ref{direct_sum} we have: $\T_{1}\to\T_{6}$ and $\T_{6}\to\T_{21}$. If we consider the family of automorphisms of $\T_{21}$
given by $A_{t}=tn_{1}$, $B_{t}=t^{2}e_{1}-n_{2}$ and $C_{t}=t^{2}n_{2}$, 
making $t$ tends to $0$ we obtain the algebra $\T_{24}$, thus $\T_{21}\rightarrow\T_{24}$.

The orbits of the  $\T_{9}$, $\T_{10}$, $\T_{13}$, $\T_{15}$,
$\T_{18}$, $\T_{23}$, $\T_{25}$, and $\T_{26}$, belong to
$\overline{\T_{2}^{\G}}$: again, combining the results for dimension two in \cite{dim2r} with Proposition \ref{conditions}\ref{direct_sum} we obtain: $\T_{2}\to\T_{9}$, $\T_{2}\to\T_{10}$
and $\T_{10}\to\T_{15}$. From \cite{irynashesta} we get the following deformations over $\mathbb{R}$: $\T_{9}\to\T_{18}$, $\T_{18}\to\T_{13}$, 
$\T_{23}\to\T_{26}$ and $\T_{26}\to\T_{25}$. To see that $\T_{15}\to\T_{23}$ take, for $t\neq0$, the change of basis $A_{t}=t^2 n_{2}$, $B_{t}=t n_{1}-t n_2$ and $C_{t}=te_{1}+n_1+n_2$ of $\T_{15}$. Since $B_t C_t=A_t+t B_t$ and ${C_t}^2=B_t+t C_t$ we get the structure of $\T_{23}$ when $t$ tends to zero.

The rigid algebra $\T_{3}$ dominates $\T_{8}$ and $\T_{14}$: consider,
for $t\neq0$, the family of automorphisms of $\T_{3}$
given by $A_{t}=\frac{1}{2}e_{1}+\frac{1}{2}e_{2}$, $B_{t}=\frac{1}{2}e_{1}-\frac{1}{2}e_{2}$
and $C_{t}=te_{3}$ when we make $t$ tends to $0$ we obtain 
$\T_{8}$, thus $\T_{3}\rightarrow\T_{8}$. From \cite{irynashesta}
we get $\T_{8}\rightarrow\T_{14}$.

The algebra $\T_{11}$ belong to the Zariski closure of the orbit of $\T_{4}$. To show that consider, for $t\neq0$, the change of basis of $\T_{4}$: $A_{t}=e_{1}$, $B_{t}=e_{2}$ and $C_{t}=te_{3}$ when
$t$ tends to $0$ we obtain the algebra $\T_{11}$. From \cite{irynashesta} and \cite{dim2r} it follows that 
 $\T_{20}\to\T_{16}$ and $\T_{7}\to\T_{17}$, respectively. 

Lastly, it remains to show that the rigid algebra $\T_{5}$ dominates $\T_{19}$. For $t\neq0$, it is sufficient to consider the change of basis: $A_{t}=\frac{1}{2}e_{1}-\frac{1}{2}e_{2}$, $B_{t}=t e_{3}$
and $C_{t}=\frac{t^2}{2}e_{1}+\frac{t^2}{2}e_2$ of $\T_5$. Since ${B_t}^2=C_t+t^2A_t$, $B_tC_t=\frac{t^2}{2}B_t$ and ${C_t}^2=t^2C_t$ the structural constants of $\T_5$ tends to those of $\T_{19}$ when $t$ tends to zero. This completes the geometric description of the variety $\Jor_3$. The orbits of $\Jor_{3}$ are represented in  Figure \ref{fig1}.
\end{proof}

\psfrag{dim Aut(T)}{$\dim\Aut(\T)$} \psfrag{T11}{$\T_{11}$}
\psfrag{T12}{$\T_{12}$} \psfrag{T13}{$\T_{13}$} \psfrag{T14}{$\T_{14}$}
\psfrag{T15}{$\T_{15}$} \psfrag{T16}{$\T_{16}$} \psfrag{T17}{$\T_{17}$}
\psfrag{T18}{$\T_{18}$} \psfrag{T19}{$\T_{19}$} \psfrag{T20}{$\T_{20}$}
\psfrag{T1}{$\T_{1}$} \psfrag{T2}{$\T_{2}$} \psfrag{T3}{$\T_{3}$}
\psfrag{T4}{$\T_{4}$} \psfrag{T5}{$\T_{5}$} \psfrag{T6}{$\T_{6}$}
\psfrag{T7}{$\T_{7}$} \psfrag{T8}{$\T_{8}$} \psfrag{T9}{$\T_{9}$}
\psfrag{T10}{$\T_{10}$}\psfrag{T21}{$\T_{21}$}\psfrag{T22}{$\T_{22}$}\psfrag{T23}{$\T_{23}$}\psfrag{T24}{$\T_{24}$}\psfrag{T25}{$\T_{25}$}\psfrag{T26}{$\T_{26}$}

\begin{figure}[H]
\centering{}\includegraphics[width=0.74\textwidth]{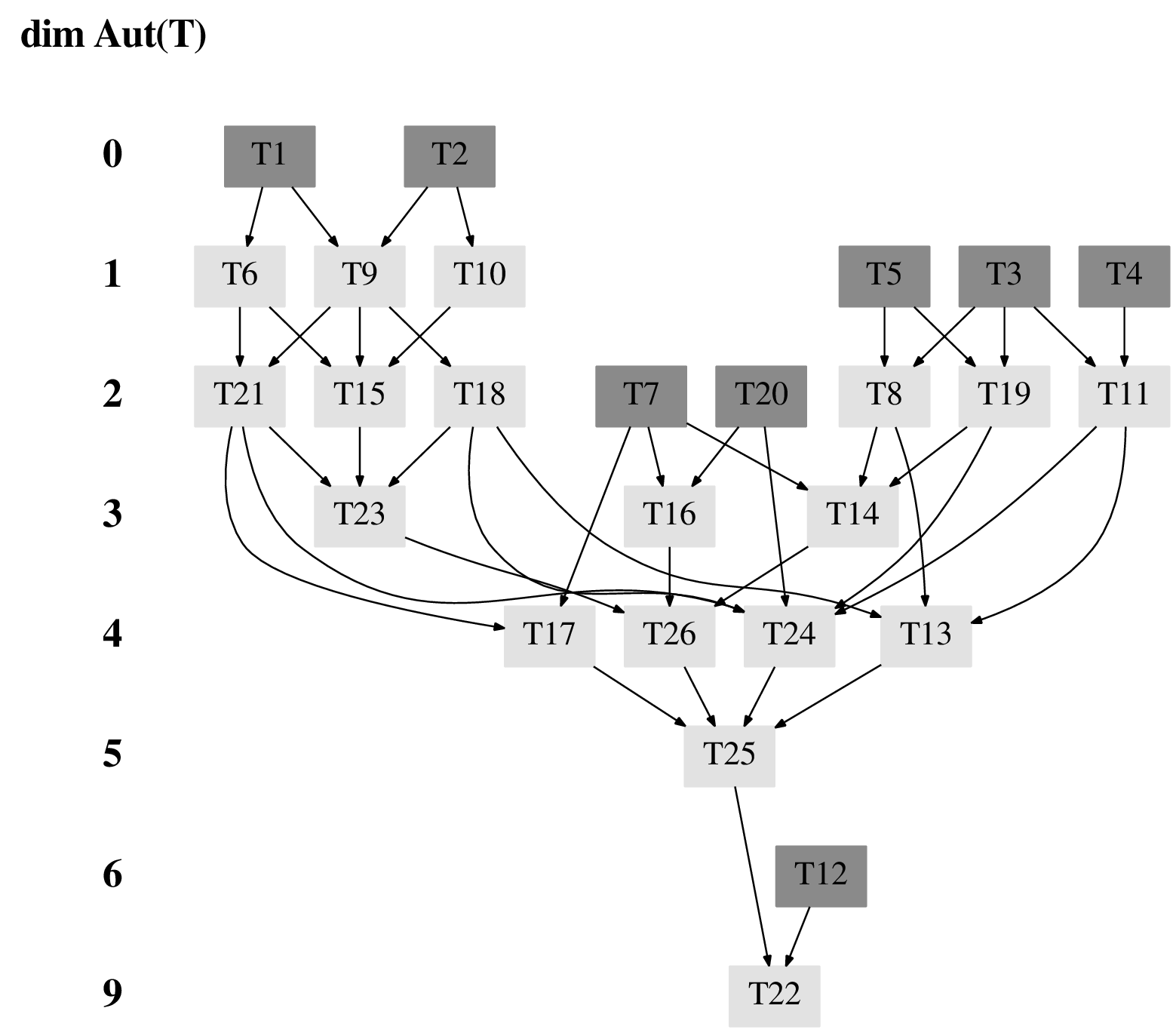}\caption{Description of the $\G$-orbits of the variety $\Jor_{3}$\label{fig1}}
\end{figure}

\subsection{Final Remarks}
 We do not provide the complete list of deformations between algebras in $\Jor_3$. For any non-rigid algebra $\T$ we found at least one rigid 
which dominates $\T$, for further examples of deformation  in $\Jor_3$ see \cite{teseJenny}.

We note that any algebra $\J$ in $\Omega$  satisfies the sufficient condition for rigidness of Proposition \ref{cohomology}, namely $H^2(\J,\J)=0$. 

The informations known until the moment about the number of orbits and irreducible components of the variety $\Jor_n$ when the base field is $\mathbb{R}$ or $\mathbb{C}$ are reunited in Table \ref{comparison}.     

\begin{table}[H]
\begin{centering}
\begin{tabular}{|c|c|c|c|c|}
\hline 
 & \multicolumn{2}{c|}{$\Jor_{n}^{\mathbb{R}}$} & \multicolumn{2}{c|}{$\Jor_{n}^{\mathbb{C}}$}\tabularnewline
\hline 
$n$ & No. orbits & No. components & No. orbits & No. components\tabularnewline
\hline 
$1$ & $2$ & $1$ & $2$ & $1$\tabularnewline
\hline 
$2$ & $7$ & $3$ & $6$ & $2$\tabularnewline
\hline 
$3$ & $26$ & $8$ & $20$ & $5$\tabularnewline
\hline 
$4$ & $>109$ & $\geq18$ & $73$ & $10$\tabularnewline
\hline 
$5$ & - & - & $\gg223$ & $\geq26$\tabularnewline
\hline 
\end{tabular}
\par\end{centering}
\caption{\label{comparison}Comparison of varieties $\Jor_{n}^{\mathbb{C}}$ and $\Jor_{n}^{\mathbb{R}}$}
\end{table}

\newpage

\bibliographystyle{dmm}
\bibliography{library}

\end{document}